\documentclass[11pt,reqno]{amsart}
\usepackage{amsthm,amsmath,amssymb}
\usepackage[utf8]{inputenc}
\usepackage{graphicx}
\usepackage{float}
\usepackage[colorlinks=true,linkcolor=blue,urlcolor=blue,citecolor=blue]{hyperref}
\usepackage{tikz}
\usepackage{enumerate}
\usepackage[toc,page]{appendix}
\usepackage{lscape}
\usepackage{multirow}
\usepackage{adjustbox,expl3,etoolbox} 
\usepackage{longtable}
\usepackage{xcolor,afterpage}
\usepackage[margin = 3.1cm]{geometry}
\usepackage{stmaryrd}
\usepackage{longtable}
\usepackage{makecell}
\setcellgapes{4pt}

\newtheorem{thm}{Theorem}[section]
\newtheorem{lem}[thm]{Lemma}
\theoremstyle{definition}

\newtheorem{prop}[thm]{Proposition}

\newtheorem{cor}[thm]{Corollary}
\newtheorem{exa}[thm]{Example}
\newtheorem{rmk}[thm]{Remark}

\DeclareMathOperator{\Gal}{Gal}
\DeclareMathOperator{\sym}{Sym}

\title[]{The search for small association schemes with noncyclotomic eigenvalues}
{\author[Herman]{Allen Herman\textsuperscript{1,2}}
	\thanks{\textsuperscript{1} This author's work was supported by an NSERC Discovery Grant. }
	\thanks{\textsuperscript{2} Department of Mathematics and Statistics, University of Regina, Regina, Saskatchewan S4S 0A2, Canada}
	\address{Department of Mathematics and Statistics, University of Regina, Regina, Saskatchewan S4S 0A2, Canada}}\email{allen.herman@uregina.ca}
	{\author[Maleki]{Roghayeh Maleki\textsuperscript{2}}
	\email{rmaleki@uregina.ca}}
\begin{document}

\begin{abstract}
In this article we determine feasible parameter sets for (what could potentially be) commutative association schemes with noncyclotomic eigenvalues that are of smallest possible rank and order. A feasible parameter set for a commutative association scheme corresponds to a standard integral table algebra with integral multiplicities that satisfies all of the parameter restrictions known to hold for association schemes.  For each rank and involution type, we generate an algebraic set for which any suitable integral solution corresponds to a standard integral table algebra with integral multiplicities, and then try to find the smallest suitable solution. The main results of this paper show the eigenvalues of association schemes of rank $4$ and nonsymmetric  association schemes of rank $ 5$ will always be cyclotomic.  In the rank $5$ cases, the results rely on calculations done by computer for Gröbner bases or for bases of rational vector spaces spanned by polynomials.  We give several examples of feasible parameter sets for small symmetric association schemes of rank $5$ that have noncyclotomic eigenvalues. 
\end{abstract}

\subjclass[2010]{Primary 05E30; Secondary 13P15}

\keywords{association schemes, table algebras, character tables}

\date{\today}
	
	\maketitle
	
\section{Introduction}	
This paper investigates the  Cyclotomic Eigenvalue Question for commutative association schemes that was posed by Simon Norton at Oberwolfach in 1980 \cite{bannaialgebraic}.  This question asks if the eigenvalues of all the adjacency matrices of relations in the scheme lie in a cyclotomic number field, or equivalently if every entry of the character table (i.e., first eigenmatrix) of a commutative association scheme is cyclotomic.  Showing this is a straightforward exercise for association schemes of rank 2 and 3.  For commutative Schurian association schemes, this property is a consequence of the character theory of Hecke algebras and the fact that Morita equivalent algebras have isomorphic centers (see \cite{herman2011schur}).   For commutative association schemes that are both $P$- and $Q$-polynomial, it follows from the fact that the splitting field of the scheme is quadratic extension of the rationals, a key ingredient of Bang, Dubickas, Koolen, and Moulten's proof of the Bannai-Ito conjecture (\cite{bang-dubickas-koolen-moulten2015}, see also \cite{martin2009commutative}).  Herman and Rahnamai Barghi proved it for commutative quasi-thin schemes \cite{herman2008quasithin}, which were later shown by Muzychuk and Ponomarenko to always be Schurian \cite{muzychuk-ponomarenko2012}.  Herman and Rahnamai Barghi also showed the cyclotomic eigenvalue property holds for commutative association schemes whose elements have valency $\le 2$ except for possibly one element of valency $3$ and/or one element of valency $> 4$ \cite[Theorem 3.3]{herman2008quasithin}.

For association schemes in general we do not know if the character values have to be cyclotomic, but we do have noncommutative examples for which the eigenvalues are not cyclotomic -- the smallest examples are two  noncommutative Schurian association schemes of order 26, and three noncommutative Schur rings of order 32 (in the latter case the corresponding graphs are Cayley graphs on a nonabelian group of order 32).

In this article we investigate the cyclotomic eigenvalue question from a smallest counterexample perspective.  For a given rank and involution type, our approach will be to generate an algebraic set in a multivariate polynomial ring in variables corresponding to the intersection numbers and character table parameters of such an association scheme. Each suitable integer point in this algebraic set corresponds to a standard integral table algebra with integral multiplicities (SITAwIM) that has the corresponding intersection matrices and character table via its regular representation.  We use the algebraic set to search for small SITAwIMs of the given type that have some noncyclotomic eigenvalues. 

The algebraic sets themselves are not easy to work with, as they are not monomial and the number of variables and polynomial generators is too large for available computer algebra systems to do efficient Gröbner basis calculations.   After manually reducing the algebraic sets with all available linear substitutions, we search for solutions by specifying values for sufficiently many remaining parameters that the resulting algebraic set can be resolved with a Gröbner basis calculation.   Using this approach, we are able to show the answer to the cyclotomic eigenvalue question is yes for all association schemes of rank 4 and for both involution types of nonsymmetric association schemes of rank 5.   
 For commutative association schemes, the noncyclotomic eigenvalue property implies the Galois group of the splitting field will be non-Abelian, and so there must be an orbit of size at least $3$ in its action on irreducible characters.  We will say the Galois group acts {\it $k$-point transitively} if the size of its largest orbit on irreducible characters is $k$.  So for symmetric association schemes of rank 5, noncylotomic eigenvalues can only occur when the Galois group of the splitting field is $3$- or $4$-point transitive. 
When this action is $4$-point transitive, the association scheme will be pseudocyclic.  This greatly reduces the number of cases we need to consider, and our searches have been able to produce six feasible examples of orders less than $1000$, the smallest having order $249$.  When the action is $3$-point transitive, the scheme is not pseudocyclic, so the search space is much larger.  We have been able to generate all examples of order less than 100 and a few more with order less than 250, ten of which satisfy all available feasibility criteria.  The smallest of these feasible examples have order $35$, $45$, $76$, and $93$.  From the partial classification of association schemes of order $35$, we know the order $35$ example cannot be realized.  The status of the larger feasible examples is open. 

\section{Preliminaries}	
In this section, we review some background results that are needed in this work. Recall that an involution $\phi$ of a finite-dimensional algebra $A$ is a map $\phi: A\rightarrow A$ such that $\phi \circ \phi=id_{A}$.
\subsection{SITA parameters}
An {\it integral table algebra} $(A,\mathbf{B})$ is a finite-dimensional complex algebra $A$ with distinguished basis $\mathbf{B}=\{b_i \mid  i\in I=\{0,1,\ldots,r-1\}\}$ such that
\begin{enumerate}[(i)]
	\item  $1 \in \mathbf{B}$,
	\item $A$ has an involution  $*: A \rightarrow A$ that is additive, reverses multiplication, and acts as complex conjugation on scalars,
	\item $\mathbf{B}$ is $*-$invariant,
	\item $\mathbf{B}$ produces non-negative integer structure constants (see \ref{structure constants}),
	\item $\mathbf{B}$ satisfies the pseudo-inverse condition: for all $b_i,b_j \in \mathbf{B}$, the coefficient of $1$ in $b_ib_j^*$ is positive if and only if $b_j=b_i^*$.
\end{enumerate}

Note that since  $\mathbf{B}^*=\mathbf{B}$, the involution $*$ is a permutation of $\{ 0,1,\ldots,r-1 \}$. Therefore, the action of the involution $*$ can be defined by $(b_i)^*=b_{i^*}$ for all $b_i\in \mathbf{B}$.

 In order to consider $A$ as an algebra of square matrices over $\mathbb{C}$, we identify the elements of $\mathbf{B}$ with their left regular matrices in the basis $\mathbf{B}$.  The basis $\mathbf{B}$ is called {\it standard} when, for all $b_i \in \mathbf{B}$, the coefficient of $1$ in $b_ib_i^*$ is equal to the maximal eigenvalue of the regular matrix $b_i$.  We refer to $r=|\mathbf{B}|$ as the {\it rank} of the table algebra, when the basis $\mathbf{B}$ is standard we say that $(A,\mathbf{B})$ is a standard integral table algebra, or SITA.  The action of the involution $*$ on the basis $\mathbf{B}$ determines the {\it involution type} of the table algebra of a given rank. 

The adjacency algebra of an association scheme is the prototypical example of a SITA, as the defining basis of adjacency matrices is a standard basis.  Conversely, the structure constants determined by the basis of adjacency matrices of an association scheme determine a standard integral table algebra that is {\it realizable} as an association scheme.  Many open problems concerning missing combinatorial objects correspond to standard integral table algebras that satisfy all the known conditions on their parameters for being realized by an association scheme, but are yet to be actually constructed.  We call such standard integral table algebras (or their parameter sets) {\it feasible}. 

Let $P=(\chi_i(b_j))_{i,j}$ be the character table of $A$ with respect to the distinguished basis $\mathbf{B}$, whose rows are indexed by the irreducible characters of $A$ and columns are indexed by the basis $\mathbf{B}$.  As we can restrict ourselves to the commutative table algebras in this paper, $P$ will be an $r \times r$ matrix.   We order the irreducible characters so that the entries $P_{0,j}=\chi_0(b_j)=\delta_j$, $j=0,1,\dots,r-1$ are equal to the Perron-Frobenius eigenvalues of the basis matrices (i.e., the {\it degrees} of standard basis elements, or in the association scheme case, the {\it valencies} of the scheme relations).  The {\it order} of a standard integral table algebra is the sum of its degrees; that is, $n = \sum_{j=0}^{r-1} \delta_j$.  The {\it multiplicity} $m_i$ of each irreducible character $\chi_i$ can be computed by the following formula \cite{bannai1993character}
\begin{align*}
\sum_{j=0}^{r-1}\frac{|P_{ij}|^2}{\delta_j}=\frac{n}{m_i}, \mbox{ for } i=0,1,\dots,r-1.
\end{align*}
For table algebras, the multiplicity $m_i$ corresponds to the coefficient of $\chi_i$ when the standard feasible trace map $\rho(\sum_{j=0}^{r-1} \alpha_j b_j) = n\alpha_0$ is expressed as a (positive) linear combination of the irreducible characters of $A$.  We always have $m_0=1$, but the other multiplicities $m_i$ for $i=1,\dots,r-1$ are only required to be positive real numbers.  When the SITA is realized by an association scheme, the standard feasible trace is the character corresponding to the standard representation of the SITA, so the $m_i$'s will be positive integers.  This is just one of the feasibility conditions for the parameters of an association scheme.  In this way, each feasible parameter set for association schemes determines a standard integral table algebra with integral multiplicities, i.e., a SITAwIM.  

A SITA is called {\it pseudocyclic} if its multiplicities $m_i$ for $i>0$ are all equal to the same positive constant $m$.  By a result of Blau and Xu \cite{xu2011pseudocyclic}, pseudocyclic SITAs are also {\it homogeneous}, that is, all degrees $\delta_i$ for $i>0$ are equal to the same positive constant.
	
\subsection{General conditions on SITA parameters}	\label{structure constants}
Let $\mathbf{B} = \{b_0,b_1,...,b_{r-1}\}$ be the standard basis of a SITA $(A,\mathbf{B})$.  Denote the \emph{structure constants} relative to the basis $\mathbf{B}$ by $(\lambda_{ijk})_{i,j,k=0}^{r-1}$, so 
$$ b_i b_j = \sum_{k=0}^{r-1} \lambda_{ijk} b_k, \mbox{ for all } i,j \in \{0,1,\dots,r-1\}. $$
Let $\chi_0(b_i) = \delta_i$ be the degree (or valency) of the basis element $b_i \in \mathbf{B}$, for all $i \in \{0,1,\dots,r-1\}$. When the algebra has a standard basis, we have $\delta_i=\delta_{i^*}=\lambda_{i^*i0}$ (see \cite[Definition~1.3]{blau2009table}).

Associativity of $A$ and the pseudo-inverse condition on the standard basis can be used to prove two general properties of the structure constants relative to $\mathbf{B}$. 

\begin{lem}\label{conditions on Sc's}
For all $i,j,k \in \{0,1,\dots,r-1\}$, 

{\rm (i)}  $\lambda_{jki^*}\delta_i=\lambda_{kij^*}\delta_j=\lambda_{ijk^*}\delta_k$, and 

{\rm (ii)} $\sum_{k=0}^{r-1} \lambda_{jki}=\delta_j$.
\end{lem}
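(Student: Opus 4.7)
The plan uses two inputs. First, the pseudo-inverse condition together with the standardness of $\mathbf{B}$ pins down the structure constants involving $b_0$ completely: the coefficient $\lambda_{\ell m 0}$ of $1$ in $b_\ell b_m$ is nonzero only when $m = \ell^*$, and in that case standardness forces $\lambda_{\ell \ell^* 0} = \delta_\ell$. Second, associativity applied to a triple product converts this into a reciprocity among triples of structure constants.

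To prove (i), I would read off the coefficient of $b_0$ in the triple product $b_i b_j b_k$ in two ways. Expanding $(b_i b_j) b_k = \sum_\ell \lambda_{ij\ell} b_\ell b_k$, only the term with $\ell = k^*$ contributes to the coefficient of $1$, giving $\lambda_{ijk^*}\delta_k$. Expanding $b_i (b_j b_k) = \sum_m \lambda_{jkm} b_i b_m$, only the term with $m = i^*$ contributes, giving $\lambda_{jki^*}\delta_i$. Associativity forces $\lambda_{ijk^*}\delta_k = \lambda_{jki^*}\delta_i$, and applying the same argument to the cyclic shift $(i,j,k) \mapsto (j,k,i)$ supplies the remaining equality in (i). As a small by-product, specializing (i) with $j = i^*$ and $k = 0$, together with $\lambda_{i^*,0,i^*} = \lambda_{0,i,i} = 1$, gives the useful symmetry $\delta_i = \delta_{i^*}$.

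For (ii), I would combine (i) with the fact that the degree map $\chi_0 : b_i \mapsto \delta_i$ is an algebra homomorphism, so $\sum_{\ell} \lambda_{ij\ell}\delta_\ell = \delta_i \delta_j$ for all $i,j$. From (i), after relabeling $i \mapsto i^*$ and using $\delta_i = \delta_{i^*}$, one gets $\lambda_{jki}\delta_i = \lambda_{i^* j k^*}\delta_k$. Summing over $k$ and substituting $k \mapsto k^*$ (with $\delta_k = \delta_{k^*}$) yields
\[ \delta_i \sum_{k=0}^{r-1} \lambda_{jki} \;=\; \sum_{k'=0}^{r-1} \lambda_{i^* j k'}\,\delta_{k'} \;=\; \chi_0(b_{i^*} b_j) \;=\; \delta_{i^*}\delta_j \;=\; \delta_i \delta_j, \]
and dividing by $\delta_i$ gives (ii).

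There is no real obstacle in this argument: once the two ingredients (standardness pinning down $\lambda_{\ell\ell^* 0}$, and associativity applied to triple products) are identified, the rest is careful bookkeeping with the involution on indices. The only step that needs a small side remark is the symmetry $\delta_i = \delta_{i^*}$, which I would dispatch as a corollary of (i) before moving on to (ii).
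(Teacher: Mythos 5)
Your argument is essentially the paper's: the authors dismiss (i) as an easy consequence of associativity and the pseudo-inverse condition (your triple-product computation of the coefficient of $b_0$ in $b_ib_jb_k$ is exactly the intended one), and their proof of (ii) applies the degree homomorphism to $b_{j^*}b_i$ and invokes (i), just as you do with $b_{i^*}b_j$.

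One bookkeeping point deserves care. By the standardness condition, the coefficient of $1$ in $b_{k^*}b_k$ is $\lambda_{k^*k0}=\delta_{k^*}$, the maximal eigenvalue of $b_{k^*}$ --- not, on its face, $\delta_k$. So your associativity computation literally yields $\lambda_{jki^*}\delta_i=\lambda_{ijk^*}\delta_{k^*}$, and passing to the stated form of (i) requires $\delta_k=\delta_{k^*}$. Your proposed derivation of that symmetry, specializing (i) at $j=i^*$, $k=0$, is circular as written: applied to the raw identity carrying $\delta_{k^*}$, that specialization collapses to the triviality $\delta_i=\delta_i$. The fix is cheap: in the commutative setting of the paper, $b_kb_{k^*}=b_{k^*}b_k$ gives $\delta_k=\lambda_{kk^*0}=\lambda_{k^*k0}=\delta_{k^*}$ directly from standardness, and in general one uses the $*$-invariance of the degree homomorphism, $\chi_0(b^*)=\chi_0(b)$ --- precisely the fact the paper itself quotes without proof in its argument for (ii). Establish $\delta_k=\delta_{k^*}$ first by one of these routes, and the rest of your proofs of (i) and (ii) go through as written.
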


\begin{proof}
(i) By the associativity of multiplication we have the following condition on the structure constants for all $i,j,k,\ell,m\in \{0,1,\ldots,r-1\}$,
\begin{align*}
	\sum_{\ell}^{} \lambda_{ij\ell}\lambda_{\ell km}=\sum_{\ell}^{}\lambda_{i\ell m}\lambda_{jk\ell}
\end{align*}
Now, fix $k$ and let $m=0$. Using  the pseudo-inverse condition on $\mathbf{B}$ we have $\lambda_{jki^*}\delta_i=\lambda_{kij^*}\delta_j=\lambda_{ijk^*}\delta_k$. 

For (ii), we have that for all $i,j \in \{0,1,\dots,r-1\}$, $b_{j^*}b_i = \sum_{k=0}^{r-1} \lambda_{j^*ik}b_k = \sum_{k=0}^{r-1} \lambda_{i^*jk^*}b_k$.  Since $\chi_0(b_j^*)=\chi_0(b_j)$ and the degree map is an algebra homomorphism from $A$ to $\mathbb{C}$, we have $$\delta_j \delta_i = \chi_0(b_{j^*}b_i) =\sum_{k=0}^{r-1} \lambda_{i^*jk^*}\delta_k,$$ 
which is equal by (i) to $\sum_{k=0}^{r-1} \lambda_{jki}\delta_i$.  So, (ii) follows. 
\end{proof}

Note that Lemma \ref{conditions on Sc's} (ii) tells us that every row sum of the left regular matrix of $b_j \in \mathbf{B}$ is equal to the constant $\delta_j$.  

\medskip
Next, we consider restrictions on the parameters of a SITA imposed by its fusions.  If $I \subseteq \{1,\dots,r-1\}$, we let $b_I = \sum_{i \in I} b_i$.  When $\Lambda = \{\{0\}, I_1, \dots, I_{s-1}\}$ is a partition of $\{0,1,\dots,r-1\}$ for which $\mathbf{B}_{\Lambda} = \{b_0, b_{I_1},\dots,b_{I_{s-1}}\}$ is the basis of a table algebra (which will automatically be the standard basis of a SITA in this case), then we say that $\mathbf{B}_{\Lambda}$ is a {\it fusion} of $\mathbf{B}$, and conversely say that $\mathbf{B}$ is a {\it fission} of $\mathbf{B}_{\Lambda}$.  The next lemma shows that every SITA admits a rank 2 fusion.   

\begin{lem}{\label{fission of rank 2}}
Every table algebra $(A,\mathbf{B})$ with standard basis $\mathbf{B}=\{b_0,b_1,\ldots,b_{r-1}\}$ of rank $r\geq3$ has the trivial rank $2$ fusion $\mathbf{B}_{\{\{0\},\{1,\dots,r-1\}\}} = \{b_0, b_1+\dots+b_{r-1} \}$. 
\end{lem}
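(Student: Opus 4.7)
The plan is to check directly that $\mathbf{B}' := \{b_0, b^+\}$, with $b^+ := b_1 + b_2 + \cdots + b_{r-1}$, satisfies all the axioms of a standard table algebra basis. Three ingredients need to be verified: $*$-invariance of $b^+$, closure of the linear span of $\mathbf{B}'$ under multiplication (with non-negative integer structure constants), and the standard basis condition that the coefficient of $b_0$ in $b^+ (b^+)^*$ agrees with the Perron--Frobenius eigenvalue of the regular matrix of $b^+$. The first ingredient is immediate: since $*$ fixes $b_0$ and permutes $\mathbf{B}$, it must permute $\{b_1,\dots,b_{r-1}\}$, so $(b^+)^* = b^+$.

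The heart of the argument will be the computation of $b^+ \cdot b^+ = \sum_{k=0}^{r-1}\alpha_k b_k$, where $\alpha_k = \sum_{i,j=1}^{r-1}\lambda_{ijk}$. For the $b_0$-coefficient, I would invoke the pseudo-inverse condition (which kills $\lambda_{ij0}$ unless $j=i^*$) together with the standard basis condition ($\lambda_{ii^*0}=\delta_i$) to conclude $\alpha_0 = \sum_{i=1}^{r-1}\delta_i = n-1$, where $n = \sum_j \delta_j$ is the order. For $k \geq 1$, I would multiply through by $\delta_k$, use Lemma~\ref{conditions on Sc's}(i) (together with $\delta_k = \delta_{k^*}$) to rewrite $\lambda_{ijk}\delta_k = \lambda_{jk^*i^*}\delta_i$, and reindex $\ell = i^*$. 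The inner sum over $\ell$ then collapses via $\sum_\ell \lambda_{jk^*\ell}\delta_\ell = \chi_0(b_j b_{k^*}) = \delta_j\delta_k$ (after separating the $\ell = 0$ contribution), and a final sum over $j \in \{1,\dots,r-1\}$ should yield $\delta_k \alpha_k = \delta_k(n-2)$, hence $\alpha_k = n-2$ uniformly in $k$.

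Once this computation is done, $b^+ \cdot b^+ = (n-1)\,b_0 + (n-2)\,b^+$ lies in the span of $\mathbf{B}'$ with non-negative integer coefficients. The standard basis condition for $b^+$ is then automatic: the matrix $B_1 + \cdots + B_{r-1}$ has constant row sum $n-1$ by Lemma~\ref{conditions on Sc's}(ii), so $n-1$ is its Perron--Frobenius eigenvalue, matching the $b_0$-coefficient of $(b^+)^2$; the positivity of $n-1$ also supplies the pseudo-inverse condition for the fused rank-$2$ basis.

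The main obstacle I anticipate is the uniformity $\alpha_k = n-2$ across all $k \geq 1$, which is not visible term by term at the level of individual structure constants and only emerges after carefully combining Lemma~\ref{conditions on Sc's}(i) with the consequence of Lemma~\ref{conditions on Sc's}(ii) that the degree map is an algebra homomorphism. Every other step is a direct appeal to the SITA axioms.
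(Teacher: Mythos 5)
Your proposal is correct, but it takes a different route from the paper. The paper's proof is a two-line manipulation: it cites the identity $(\mathbf{B}^+)^2=\chi_0(\mathbf{B}^+)\mathbf{B}^+=n\mathbf{B}^+$ from Arad--Fisman--Muzychuk, and then expands $(\mathbf{B}^+-b_0)^2=n\mathbf{B}^+-2\mathbf{B}^++b_0=(n-2)(\mathbf{B}^+-b_0)+(n-1)b_0$, which is exactly the identity $(b^+)^2=(n-1)b_0+(n-2)b^+$ you arrive at. What you do instead is reprove that identity from first principles: you compute the coefficients $\alpha_k=\sum_{i,j\ge 1}\lambda_{ijk}$ directly, getting $\alpha_0=n-1$ from the pseudo-inverse and standard-basis conditions, and $\alpha_k=n-2$ for $k\ge 1$ by combining Lemma~\ref{conditions on Sc's}(i) with the fact that $\chi_0$ is an algebra homomorphism and carefully subtracting the $\ell=0$ contribution. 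I checked the reindexing $\ell=i^*$ and the bookkeeping of the diagonal term $\lambda_{jk^*0}=[j=k]\,\delta_j$, and the computation goes through. Your approach buys self-containedness (no appeal to the external reference, and it makes visible exactly which axioms force the uniformity of $\alpha_k$), at the cost of length; the paper's approach buys brevity by outsourcing the key identity. Your closing observations --- that the constant row sum $n-1$ from Lemma~\ref{conditions on Sc's}(ii) gives the Perron--Frobenius eigenvalue of $b^+$ and hence the standard-basis and pseudo-inverse conditions for the fused basis --- are correct and are points the paper leaves implicit in ``the lemma follows.''
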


\begin{proof} 
Let $\mathbf{B}^+ = \sum_{j=0}^{r-1} b_j$.  By \cite{arad1999generalized} we have 
$(\mathbf{B}^+)^2=\chi_0(\mathbf{B}^+)\mathbf{B}^+=n\mathbf{B}^+$.  It follows that 
$((\mathbf{B}-\{b_0\})^+)^2=n\mathbf{B}^+-2\mathbf{B}^++b_0=(n-2)\mathbf{B}^++b_0=(n-2)(\mathbf{B}^+-\{b_0\})+(n-1)b_0.$  This implies $\{b_0, \mathbf{B}^+-b_0 \}$ is a $*$-invariant subset of $\mathbf{B}$ that generates a $2$-dimensional subalgebra of $A$.  The lemma follows. 
\end{proof} 

The conditions imposed by fusion on the parameters of a commutative association scheme were studied by Bannai and Song in \cite{bannai1993character}.  For structure constants the conditions are straightforward, for character table parameters the existence of a fusion imposes certain identities on partial row and column sums of $P$.    Let $\Lambda = \{ \{0\}, J_1, \dots, J_{d-1} \}$ be the partition inducing the fusion $\mathbf{B}_{\Lambda} = \{\tilde{b}_0, \tilde{b}_{J_1}, \dots, \tilde{b}_{J_{d-1}} \}$ of our standard integral table algebra basis $\mathbf{B}$. 
If $E = \{ e_0, e_1, \dots, e_{r-1} \}$ is the basis of primitive idempotents of $A$, then there is a (dual) partition $\Lambda^* = \{ \{0\}, K_1,\dots,K_{d-1} \}$ of $\{0,1,\dots,r-1\}$, unique to the fusion, such that, if $\tilde{e}_0 = e_0$ and $\tilde{e}_{K_i} = \sum_{k \in K_i} e_k$ for $i = 1,\dots,d-1$, then $\tilde{E} = \{\tilde{e}_0, \tilde{e}_{K_1}, \dots, \tilde{e}_{K_{d-1}} \}$ is the basis of primitive idempotents of the algebra $\mathbb{C}\mathbf{B}_{\Lambda}$. 

Let $\tilde{P}$ be the character table of the fusion $\mathbf{B}_{\Lambda}$, so the rows of $\tilde{P}$ are indexed by the irreducible characters $\tilde{\chi}_I$ for $I \in \Lambda^*$, and the columns of $\tilde{P}$ are indexed by the basis elements $b_J$ for $J \in \Lambda$.  Let $\tilde{\delta} = \tilde{\chi}_0$ and $\delta = \chi_0$ be the respective degree maps.   Let $\tilde{\delta}(b_J) = \tilde{k}_J$ for all $J \in \Lambda$, and $\delta(b_j) = k_j$ for all $j \in \{1,\dots,r-1\}$.   Let $\tilde{m}_I$ and $m_i$ denote the multiplicities of $\tilde{\chi}_I$ and $\chi_i$, respectively. Then we have the following identities on partial row and column sums.

\begin{thm}     (Theorem 1.4,   \cite{bannai1993character})\label{thm:fissionfacts}
Let $J \in \Lambda$ and $I \in \Lambda^*$.  
		
{\rm (i)} For all $j \in J$, $\sum_{i \in I} m_i P_{i,j} = \frac{k_j \tilde{m}_I}{\tilde{k}_J} \tilde{P}_{I,J}$. 
		
{\rm (ii)} For all $i \in I$, then $\tilde{P}_{I,J} = \sum_{j \in J} P_{i,j}$.  
\end{thm}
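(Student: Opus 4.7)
The plan is to work via the primitive idempotents of $A$ and of the fusion subalgebra $\mathbb{C}\mathbf{B}_{\Lambda}$. The central tool is the formula
\[
e_i \;=\; \frac{m_i}{n}\sum_{j=0}^{r-1}\frac{\overline{P_{i,j}}}{k_j}\,b_j,
\]
which expresses each primitive idempotent of $A$ in the standard basis. It is a direct packaging of column orthogonality of the character table (equivalently, of $P^{*}\,\mathrm{diag}(m_{i})\,P = n\,\mathrm{diag}(k_{j})$, which is itself the matrix form of the multiplicity formula $\sum_{j}|P_{i,j}|^{2}/k_{j}=n/m_{i}$ already recorded in Section 2). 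Applied inside the subalgebra $\mathbb{C}\mathbf{B}_{\Lambda}$ the same derivation gives
\[
\tilde{e}_I \;=\; \frac{\tilde{m}_I}{n}\sum_{J\in\Lambda}\frac{\overline{\tilde{P}_{I,J}}}{\tilde{k}_J}\,\tilde{b}_J.
\]

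For (ii), I would first argue that for every $i \in I$ the character $\chi_i$ of $A$ restricts to $\tilde{\chi}_I$ on $\mathbb{C}\mathbf{B}_{\Lambda}$. The idempotents $\tilde{e}_{I'} = \sum_{k\in I'}e_k$ form a basis of primitive idempotents of the fusion, and $\chi_i(\tilde{e}_{I'}) = \delta_{I,I'}$ whenever $i\in I$, so $\chi_i|_{\mathbb{C}\mathbf{B}_{\Lambda}}$ and $\tilde{\chi}_I$ take identical values on an idempotent basis of the fusion and must coincide. Evaluating at $\tilde{b}_J = \sum_{j\in J}b_j$ now gives $\tilde{P}_{I,J} = \tilde{\chi}_I(\tilde{b}_J) = \chi_i(\tilde{b}_J) = \sum_{j\in J}P_{i,j}$, which is exactly (ii).

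For (i), I would expand $\tilde{e}_I = \sum_{k\in I}e_k$ using the idempotent formula above to obtain one expression for $\tilde{e}_I$ in the basis $\mathbf{B}$, and compare it with the second expression produced from the fusion formula after writing each $\tilde{b}_J$ as $\sum_{j\in J}b_j$. Equating coefficients of a fixed $b_j$ with $j\in J$ gives
\[
\frac{1}{nk_j}\sum_{i\in I}m_i\,\overline{P_{i,j}} \;=\; \frac{\tilde{m}_I\,\overline{\tilde{P}_{I,J}}}{n\tilde{k}_J},
\]
and taking complex conjugates delivers the identity in (i). As a sanity check, the specialization $J=\{0\}$ collapses to $\tilde{m}_I=\sum_{i\in I}m_i$, the expected relation between fused multiplicities.

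The only real obstacle is the primitive-idempotent formula itself: one must verify that the character orthogonality underlying the Fourier inversion $e_i = \frac{m_i}{n}\sum_j \overline{P_{i,j}}\,b_j/k_j$ is available for a commutative SITA and not only for a realized association scheme. Once this is in hand, both parts of the theorem are purely linear-algebraic bookkeeping applied to the two expressions for $\tilde{e}_I$, with complex conjugation being the only subtlety for the nonsymmetric case.
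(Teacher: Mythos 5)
Your proposal is correct and follows essentially the same route as the paper: part (i) by comparing the two expansions of $\tilde{e}_I$ in the distinguished basis via the primitive-idempotent formula (the paper writes it as $e_i=\frac{m_i}{n}\sum_j\frac{P_{i,j}}{k_j}b_j^*$, equivalent to your conjugated form), and part (ii) by exploiting that each $\chi_i$ with $\chi_i(b_0)=1$ acts as a one-dimensional representation, which the paper phrases as $b_je_i=P_{i,j}e_i$ rather than as restriction of characters. The "obstacle" you flag is not one: the idempotent formula for standard table algebras is exactly what the paper cites from Arad--Fisman--Muzychuk.
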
 

\begin{proof} (i). We are assuming $\tilde{e}_I = \sum_{i \in I} e_i$.  Using the formula for primitive idempotents in a standard table algebra \cite{arad1999generalized},   
		$$ \tilde{e}_I = \frac{\tilde{m}_I}{n} \sum_J \frac{\tilde{P}_{I,J}}{\tilde{k}_J} \tilde{b}_J^* = \sum_J \sum_{j \in J} \frac{\tilde{m}_I \tilde{P}_{I,J}}{n \tilde{k}_J } b_j^*. $$
		On the other hand, 
		$$ \tilde{e}_I = \sum_{i \in I} e_i = \sum_{i \in I} \frac{m_i}{n} \sum_j \frac{P_{i,j}}{k_j} b_j^* $$
		$$ \qquad = \sum_J \sum_{j \in J} \sum_{i \in I} \frac{m_i P_{i,j}}{n k_j} b_j^*. $$ 
Therefore, for all $j \in J$, $\sum_{i \in I} m_i P_{i,j} = \frac{k_j \tilde{m}_I}{\tilde{k}_J} \tilde{P}_{I,J}$, as required.  
		
		(ii). When $\chi_i(b_0) = 1$, we have $b_je_i = P_{i,j} e_i$ for all $b_j \in \mathbf{B}$.  On the one hand, 
		$$ \tilde{b}_J \tilde{e}_I = \tilde{P}_{I,J} \tilde{e}_I = \sum_{i \in I} \tilde{P}_{I,J} e_i, $$
		and on the other hand, assuming $\chi_i(b_0) = 1$ for all $i \in I$, 
		$$ \tilde{b}_J \tilde{e}_I = \sum_{j \in J} \sum_{i \in I} b_j e_i = \sum_{i \in I} \sum_{j \in J} P_{i,j} e_i. $$
		Therefore, $\tilde{P}_{I,J} = \sum_{j \in J} P_{i,j}$ for all $i \in I$.  
	\end{proof}  

We remark that the fusion condition (i) on partial column sums holds without change for noncommutative table algebras.  Note that standard character considerations tell us $\sum_{j \in J} m_j \chi_j(b_0) = \tilde{m}_J$.  Condition (ii) on partial row sums holds for the rows of $\tilde{P}$ indexed by the $\tilde{\chi}_I$ for which $\chi_i(b_0)=1$ for all $i \in I$. 
		
\subsection{The Splitting Field and its Galois Group}
If $(A,\mathbf{B})$ is a commutative integral table algebra with standard basis $\mathbf{B}=\{b_0, b_1, \cdots, b_{r-1}\}$, the {\it splitting field} of $(A,\mathbf{B})$ is the field $K$ obtained by adjoining all the eigenvalues of the regular matrices of elements of $\mathbf{B}$ to the rational field $\mathbb{Q}$, or equivalently, the smallest field $K$ for which the character table $P$ lies in $M_r(K)$, the algebra of $r \times r$ matrices over the field $K$. As each $b_j$ in $\mathbf{B}$ is a nonnegative integer matrix, $K$ is also the unique minimal Galois extension of $\mathbb{Q}$ that splits the characteristic polynomials of every $b_j \in \mathbf{B}$.  Let $G = Gal(K/\mathbb{Q})$ be the Galois group of this splitting field.  Since the irreducible characters of $A$ are also irreducible representations of $A$ in the commutative case, $G$ will act faithfully on the set of irreducible characters of $A$ via $\chi_i^{\sigma}(b_j)=(\chi_i(b_j))^{\sigma}$, for all $\chi_i \in Irr(A)$, $b_j \in \mathbf{B}$, and $\sigma \in G$.  In this way $G$ permutes the rows of the character table $P$, as well as the corresponding multiplicities.  For SITAwIMs this means $G$ can only permute sets of irreducible characters with the same multiplicity. 

By the Kronecker-Weber theorem, a necessary and sufficient condition for $(A,\mathbf{B})$ to be a standard integral table algebra with noncyclotomic character values is for this Galois group $G$ to be non-abelian.  If $G$ is non-abelian, the fact that the action of $G$ on irreducible characters of $A$ is faithful forces there to be at least one orbit of size $3$ or more.  

\begin{thm}\label{thm:central} \cite{munemasa1991splitting}
Let $(A,\mathbf{B})$ be an integral table algebra (possibly noncommutative).  Let $H$ be the subset of $G=Gal(K/\mathbb{Q})$ consisting of elements $\sigma \in G$ whose action on the character table $P = (\chi(b))_{\chi,b}$ can be realized by a permutation of the basis, that is, for all $b\in \mathbf{B}$ there exists $b^{\sigma}\in \mathbf{B}$ such that for all $\chi \in Irr(A)$, $(P_{\chi,b})^{\sigma} = \chi(b^{\sigma}) = P_{\chi,b^{\sigma}}$.  Then $H$ is a central subgroup of $G$. 
\end{thm}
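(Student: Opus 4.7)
My plan is to first check that $H$ is a subgroup of $G$, and then exploit the two different descriptions of how $\sigma \in H$ acts on the entries of $P$ to force every $\sigma \in H$ to commute with every $\tau \in G$.

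For the subgroup check, note that any $\sigma \in G$ acts on $K$ by permuting the characters of $A$: because each $b \in \mathbf{B}$ is an integer matrix, we have $\chi(b)^{\sigma} = \chi^{\sigma}(b)$ for all $\chi \in \mathrm{Irr}(A)$. For $\sigma \in H$, the hypothesis says that the same action is realized by a permutation $b \mapsto b^{\sigma}$ of $\mathbf{B}$, i.e.\ $\chi(b)^{\sigma} = \chi(b^{\sigma})$. Given $\sigma_1, \sigma_2 \in H$, the composite permutation $b \mapsto (b^{\sigma_1})^{\sigma_2}$ realizes $\sigma_1\sigma_2$ on $P$, so $\sigma_1\sigma_2 \in H$; and since $b \mapsto b^{\sigma}$ is a bijection of the finite set $\mathbf{B}$ (its inverse then realizes $\sigma^{-1}$), $H$ is closed under inverses.

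For centrality, fix $\sigma \in H$ and $\tau \in G$ arbitrary, and pick any $\chi \in \mathrm{Irr}(A)$ and $b \in \mathbf{B}$. Computing $\chi(b)^{\sigma\tau}$ two ways: first apply $\sigma$ using the basis-permutation description, then apply $\tau$ using the character-permutation description,
\begin{equation*}
\chi(b)^{\sigma\tau} = \bigl(\chi(b^{\sigma})\bigr)^{\tau} = \chi^{\tau}(b^{\sigma}).
\end{equation*}
Now compute $\chi(b)^{\tau\sigma}$ by applying $\tau$ first (via characters) and then $\sigma$ (via its basis-permutation realization, which holds uniformly in the character, in particular for $\chi^{\tau}$):
\begin{equation*}
\chi(b)^{\tau\sigma} = \bigl(\chi^{\tau}(b)\bigr)^{\sigma} = \chi^{\tau}(b^{\sigma}).
\end{equation*}
Hence $\sigma\tau$ and $\tau\sigma$ agree on every entry of $P$. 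Since $K$ is generated over $\mathbb{Q}$ by these entries, $\sigma\tau = \tau\sigma$ in $G$, so $H \subseteq Z(G)$.

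The argument is essentially symbol-pushing once the setup is correct, so I do not anticipate a genuine obstacle; the only subtle point is justifying the identity $\chi(b)^{\tau} = \chi^{\tau}(b)$ for every $\tau \in G$ (which uses integrality of the structure constants, so that Galois acts on $P$ only through permuting irreducible characters) and then contrasting it with the extra basis-level realization available for $\sigma \in H$. One should also note that the result covers the possibly noncommutative case stated in the theorem, which needs no modification because the displayed computation treats each character value as a scalar in $K$ and never uses commutativity of $A$.
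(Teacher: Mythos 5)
Your proposal is correct and follows essentially the same route as the paper: verify closure of $H$ under composition (with finiteness supplying inverses), then evaluate $(P_{\chi,b})^{\sigma\tau}$ and $(P_{\chi,b})^{\tau\sigma}$ by interleaving the character-permutation action of a general Galois element with the basis-permutation realization available for elements of $H$, and conclude equality of the two automorphisms from their agreement on every entry of $P$. The only cosmetic difference is the final justification (you invoke that the entries of $P$ generate $K$, while the paper cites faithfulness of the action on the rows of $P$), which amounts to the same thing.
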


\begin{proof}
To see that $H$ is a subgroup of $G$, let $\sigma,\tau \in H$, $\chi \in Irr(A)$, and $b\in\mathbf{B}$.  Then
$$ (P_{\chi,b})^{\sigma \tau} = ((P_{\chi,b})^{\sigma})^{\tau} = (P_{\chi,b^{\sigma}})^{\tau} = (P_{\chi,b^{\sigma \tau}}). $$
Therefore, $\sigma\tau \in H$.  Since $G$ is finite, $H$ is a subgroup. 

To see that $H$ is central, let $\tau \in H$, $\sigma \in G$, $\chi \in Irr(A)$, and $b \in \mathbf{B}$.  Then 
$$ (P_{\chi,b})^{\sigma \tau}=(P_{\chi^{\sigma}, b})^{\tau}=(P_{\chi^{\sigma}, b^{\tau}})=(P_{\chi, b^{\tau}})^{\sigma}=((P_{\chi b})^{\tau})^{\sigma}=(P_{\chi, b})^{\tau\sigma}. $$ 
As the action of $G$ on the rows of $P$ is faithful, this implies $\sigma\tau=\tau\sigma$, so $H$ is contained in $Z(G)$. 
\end{proof}

The above theorem always applies to commutative table algebras that are not symmetric.

\begin{cor}\label{asym}
Suppose $(A,\mathbf{B})$ is a commutative table algebra that is not symmetric.  Then the restriction of complex conjugation to $K$ is a nonidentity element of the center of $G$.
\end{cor}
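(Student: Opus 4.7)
The plan is to invoke Theorem \ref{thm:central} directly with $\sigma$ equal to the restriction of complex conjugation to $K$. I need to verify two things: first, that complex conjugation is realized on the character table by the basis permutation $b \mapsto b^*$, so that it lies in the subgroup $H$; and second, that it acts nontrivially on $K$, so that it is not the identity element of $Z(G)$.

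For the first point, I would invoke the standard identity $\overline{\chi(b)} = \chi(b^*)$ for commutative integral table algebras. This follows because the characters are algebra homomorphisms $A \to \mathbb{C}$, the structure constants are nonnegative integers, and $*$ is an involutive antiautomorphism such that $b^{**}=b$: applying $\chi$ to the identity $(bb^*)$ and comparing degree computations, or, more cleanly, noting that if $e_\chi$ is the primitive idempotent corresponding to $\chi$, then $e_\chi^* = e_{\overline{\chi}}$, one obtains $\chi(b^*) = \overline{\chi(b)}$. Consequently, for every $b \in \mathbf{B}$ the element $b^\sigma := b^*$ satisfies $(P_{\chi,b})^\sigma = P_{\chi,b^*}$, so complex conjugation is an element of $H$.

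For the second point, suppose complex conjugation acted trivially on $K$. Then $\chi(b) = \overline{\chi(b)} = \chi(b^*)$ for every $\chi \in \mathrm{Irr}(A)$ and every $b \in \mathbf{B}$. Since the irreducible characters of a commutative semisimple algebra separate the basis elements, this would force $b = b^*$ for every $b \in \mathbf{B}$, contradicting the assumption that $(A,\mathbf{B})$ is not symmetric. Therefore complex conjugation restricts to a nonidentity element of $G$.

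Combining these observations, the restriction of complex conjugation to $K$ lies in $H \setminus \{1\}$, and Theorem \ref{thm:central} then places it in $Z(G)$. The only mildly delicate point is the identity $\chi(b^*) = \overline{\chi(b)}$, but this is a well-known property of standard integral table algebras and should require at most a one-line justification.
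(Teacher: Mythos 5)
Your proposal is correct and follows essentially the same route as the paper: both rest on the identity $\chi(b^*)=\overline{\chi(b)}$ to place complex conjugation in the subgroup $H$ of Theorem \ref{thm:central}, and both derive nontriviality from the invertibility of $P$ (your ``characters separate basis elements'' is exactly the paper's observation that equal columns would make $P$ singular). Nothing further is needed.
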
 

\begin{proof} 
Commutative table algebras that are not symmetric always have at least one irreducible character that is not real-valued.  If otherwise, the identity $\chi_i(b_j^*)=\overline{\chi_i(b_j)}$, for all $\chi_i \in Irr(A)$ and $b_j \in \mathbf{B}$, would imply the character table $P$ would not be invertible.  For the irreducible characters that are not real-valued, the restriction of complex conjugation to $K$ will be a non-identity element of $G$ that is realized by the permutation of $\mathbf{B}$ corresponding to the involution.  By Theorem \ref{thm:central}, this element lies in the center of $G$. 
\end{proof} 

\subsection{Algebraic sets for SITAwIMs of a given rank and involution type.}
As indicated in the introduction, we will obtain our results by searching for suitable nonnegative integer points in an algebraic set (i.e., the solution set to a system of polynomial equations) that is determined by the parameters of SITAwIMs of a given rank and involution type.  To illustrate how the generating sets for the ideals corresponding to these algebraic sets are produced, we give the type $4A1$ case as an example.  This is the algebraic set corresponding to rank $4$ SITAwIMs whose basis $\mathbf{B}$ contains one asymmetric pair, i.e., $\mathbf{B} = \{b_0,b_1,b_2,b_2^*\}$.  Using the properties of the involution, the row sum property, commutativity of the algebra, and the fact that $b_ib_j=\sum_{k=0}^{3}\lambda_{ijk}b_k$, the general form of the regular matrices for the nontrivial elements of this basis is 
$$ b_1 = \begin{bmatrix} 0 & k _1 & 0 & 0 \\ 1 & k_1-2x_1-1 & x_1 & x_1 \\ 0 & k_1-x_2-x_3 & x_2 & x_3 \\ 0 & k_1-x_2-x_3 & x_3 & x_2 \end{bmatrix}, \quad b_2 = \begin{bmatrix} 0 & 0 & 0 & k_2 \\ 0 & x_1 & k_2-x_1-x_4 & x_4 \\ 1 & x_2 & k_2-x_2-x_5-1 & x_5 \\ 0 & x_3 & k_2-x_3-x_5 & x_5 \end{bmatrix}, \mbox{ and } $$ 
$$ b_2^* = \begin{bmatrix} 0 & 0 & k_2 & 0 \\ 0 & x_1 & x_4 & k_2-x_1-x_4 \\ 0 & x_3 & x_5 & k_2-x_3-x_5 \\ 1 & x_2 & x_5 & k_2-x_2-x_5-1 \end{bmatrix}. $$
Identifying entries in the matrix equations resulting from the identities that define the regular representation gives several linear and quadratic identities in the variables $x_1,\dots,x_5,k_1,k_2$, each of which corresponds to a multivariate polynomial equalling $0$.  For example, identifying entries on both sides of the matrix equation 
$$b_1b_2 = x_1b_1+x_2b_2+x_3b^*_2$$ gives a list of $8$ polynomials: 
$$\begin{array}{l} 
-x_2k_2+x_4k_1, \\
-x_1k_1-x_3k_2-x_4k_1+k_1k_2, \\
-x_1x_3-x_2x_4+x_3^2-x_3x_4+2x_3x_5-x_3k_2+x_4k_1, \\
x_1x_3-x_1k_1+x_2x_4-x_2k_2-x_3^2+x_3x_4-2x_3x_5-x_4k_1+k_1k_2, \\
-x_1x_2+x_2x_3-x_2x_4-x_3x_4+2x_3x_5-x_3k_2+x_4k_1+x_3, \\
x_1x_2-x_1k_1-x_2x_3+x_2x_4-x_2k_2+x_3x_4-2x_3x_5-x_4k_1+k_1k_2-x_3, \\
-x_1^2+x_1x_3-2x_1x_4+2x_1x_5-x_2x_4+x_3x_4-x_3k_2+x_4k_1-x_4+k_2, \mbox{ and } \\
x_1^2-x_1x_3+2x_1x_4-2x_1x_5-x_1k_1+x_2x_4-x_2k_2-x_3x_4-x_4k_1+k_1k_2+x_4-k_2.
\end{array}$$    
We get similar lists of polynomials from the defining identities for $b_1^2$, $b_1b_2^*$, $b_2^2$, $b_2b_2^*$, and $(b_2^*)^2$, and possibly still more from the commuting identities $b_1b_2=b_2b_1$, $b_1b_2^*=b_2^*b_1$, and $b_2b_2^*=b_2^*b_2$.  In the type $4A1$ case, up to sign, this process produces $16$ distinct polynomials.  

When we add the integral multiplicities condition, it leads to extra trace identities that can be added to our list.  For each choice of multiplicities $m_i \in \mathbb{Z}^+$, $i=1,\dots,r-1$, we have an identity satisfied by our character table $P$ resulting from the column orthogonality relation: 
$$ k_j + \sum_{i=1}^{r-1} m_i P_{i,j} = 0. $$ 
In light of assumptions we can make regarding the Galois group, certain rows of $P$ will be Galois conjugate, and the sums of $P_{i,j}$'s corresponding to these rows have to be rational algebraic integers, and thus integers.  The multiplicities corresponding to Galois conjugate rows are the same.  Summing these rows of $P$ gives the rational character table, an integer matrix satisfying certain column and row orthogonality conditions.  The entries in each column of this matrix are bounded in terms of the first entry $k_j$ of the column, so we can search for the possible rational character tables for a given choice of multiplicities.  For each possible rational character table, we can add linear trace identities 
$$ tr(b_j) = k_j + \sum_{i=1}^{r-1} P_{i,j}, \quad j=1,\dots,r-1, $$
to our list of polynomials.  

Let $\mathcal{S}$ be the set of polynomials produced by this process.  Let $\mathcal{I}$ be the ideal generated by $\mathcal{S}$, and let $\mathcal{V}(\mathcal{I})$ be the corresponding algebraic set.  The regular matrices of any SITAwIM of type $4A1$ with the given choice of multiplicities corresponds naturally to a point in $\mathcal{V}(\mathcal{I})$ with $x_1, \dots, x_5 \in \mathbb{N}$ and $k_1,k_2 \in \mathbb{Z}^+$.  We will refer to this as a {\it suitable} integral point in the algebraic set.  Conversely, any suitable integral point in $\mathcal{V}(\mathcal{I})$ corresponds to a SITAwIM of this rank, involution type, and choice of multiplicities. 

For example, if we assume $m_1=m_2=m_3$ in the type $4A1$ case, it adds the trace identities $tr(b_j)=k_j-1$ for $j=1,2,3$, all of which reduce to $x_1=x_2$.  Since this pseudocyclic assumption implies the SITA is homogeneous, we also get $m_1=k_1=k_2$.  Other linear identities, or ones that become linear after cancelling one of our nonzero degrees $k_j$, can also be used to reduce the number of variables we need to consider.  For example, in the type $4A1$ case, one of the elements of $\mathcal{S}$ is $k_2(k_2-1-x_2-2x_5)$, so we can substitute $x_2=k_2-1-2x_5$ and reduce the number of variables by one.  After we reduce by all available linear substitutions in the type $4A1$ case, only one polynomial remains:
$$f(x_5,k_1) = 36x_5^2-24x_5k_1+4k_1^2+32x_5-11k_1+7.$$  
Putting this together with our linear substitutions, we can conclude that any pseudocyclic SITAwIM of type $4A1$ corresponds, via the above regular matrices, to an integer point $(x_1,x_2,x_3,x_4,x_5,k_1,k_2)$ for which $f(x_5,k_2)=0$, $x_5 \ge 0$, $k_1=k_2>0$, $x_1=x_2=x_4=k_1-2x_5-1 \ge 0$, and $x_3=4x_5-k_1+2 \ge 0$.  This is an effective formula to generate pseudocyclic SITAwIMs of type $4A1$.  
  
We refer the readers to  ~\cite{HMprogram} for the GAP implementation that produces
  the defining list of polynomials for rank 4 and 5 SITAwIMs of each
  involution type.

\section{Rank 4 SITAwIMs have cyclotomic eigenvalues}

In this section we show that rank $4$ SITAwIMs have cyclotomic eigenvalues.  In this case there are two involution types to consider: type $4A1$ and type $4S$. 

\begin{prop} 
Rank $4$ SITAwIMs with one asymmetric pair of standard basis elements have cyclotomic eigenvalues.  In fact, their eigenvalues lie in quadratic number fields.   
\end{prop}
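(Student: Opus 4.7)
The plan is to leverage the centrality of complex conjugation in $G = \mathrm{Gal}(K/\mathbb{Q})$ guaranteed by Corollary \ref{asym} together with the fact that there are only three nontrivial irreducible characters. The goal is to show $[K:\mathbb{Q}] \le 2$, after which Kronecker-Weber immediately yields cyclotomic eigenvalues.

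First I would pin down the action of complex conjugation on $\{\chi_1,\chi_2,\chi_3\}$. Since $b_1 = b_1^*$, every $\chi_i(b_1)$ is real; since $b_2 \ne b_2^*$, some $\chi_i(b_2)$ must be nonreal, for otherwise the columns of $P$ indexed by $b_2$ and $b_2^*$ would coincide and $P$ would be singular. Complex conjugation therefore acts on the three nontrivial characters as a single transposition with one fixed point. After relabeling, I may assume $\chi_3$ is real-valued and $\chi_2 = \overline{\chi_1}$, giving a character table of the form
$$ P = \begin{pmatrix} 1 & k_1 & k_2 & k_2 \\ 1 & a & b & \overline{b} \\ 1 & a & \overline{b} & b \\ 1 & c & d & d \end{pmatrix} $$
with $a,c,d \in \mathbb{R}$ and $b \notin \mathbb{R}$.

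Next I would use the centrality of complex conjugation to restrict the full Galois action. Any $\sigma \in G$ commutes with complex conjugation, so $\sigma$ must preserve its cycle structure on $Irr(A)$. In particular $\sigma(\chi_3)$ is a real nontrivial character, forcing $\sigma(\chi_3) = \chi_3$, and $\sigma$ must permute $\{\chi_1,\chi_2\}$. Because the action of $G$ on $Irr(A)$ is faithful, this gives an injection $G \hookrightarrow S_2$, so $[K:\mathbb{Q}] = |G| \le 2$.

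Quadratic extensions of $\mathbb{Q}$ are abelian and hence by Kronecker-Weber are contained in cyclotomic fields, so every entry of $P$ is cyclotomic; in fact all nontrivial eigenvalues lie in the quadratic field $K = \mathbb{Q}(b)$. I do not expect any real obstacle here. The only place requiring mild care is the passage from ``$\sigma$ commutes with complex conjugation'' to ``$\sigma$ fixes $\chi_3$'', which uses the observation that there is a unique real nontrivial character.
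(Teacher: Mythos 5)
Your proof is correct and rests on the same two ingredients as the paper's: the faithful action of $G$ on the three nontrivial characters and the centrality of complex conjugation from Corollary \ref{asym}. The paper packages the group theory as ``a non-abelian subgroup of $S_3$ is $S_3$, which has trivial center,'' while you instead compute the centralizer of the central transposition directly to get $G\hookrightarrow S_2$; these are the same argument in substance, and both yield $|G|=2$ and hence a quadratic splitting field.
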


\begin{proof} Suppose $(A,\mathbf{B})$ is a SITAwIM of rank $4$ with $\mathbf{B} = \{ b_0, b_1, b_2, b_2^*\}$.  If there were nonidentity elements of $\mathbf{B}$ with noncyclotomic eigenvalues, the Galois group $G$ of the splitting field $K$ would have to be $3$-point transitive; i.e., a transitive subgroup of $Sym(\{\chi_1, \chi_2, \chi_3 \})$.  Since $G$ would have to be non-abelian, it would have to be isomorphic to $S_3$.  But $|Z(G)|>1$ by Corollary \ref{asym}, so this is a contradiction. 

Since there are no $3$-transitive groups with a central element of order $2$, we can conclude that $G$ is cyclic of order $2$, and therefore $K$ is a quadratic extension of $\mathbb{Q}$.
\end{proof}

\begin{thm}{\label{main thm}}
Symmetric rank $4$ SITAwIMs have cyclotomic eigenvalues. 
\end{thm}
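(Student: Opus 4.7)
My plan is to argue by contradiction. Suppose $(A,\mathbf{B})$ is a symmetric rank $4$ SITAwIM with noncyclotomic eigenvalues. Reprising the argument from the asymmetric case, the Galois group $G = \mathrm{Gal}(K/\mathbb{Q})$ embeds faithfully into $\mathrm{Sym}(\{\chi_1,\chi_2,\chi_3\}) \cong S_3$, and Kronecker-Weber nonabelianness forces $G \cong S_3$. Since $G$ acts transitively on the non-principal characters, they share a common multiplicity $m = m_1 = m_2 = m_3$, and by the Blau-Xu theorem the scheme is homogeneous with common valency $k = \delta_1 = \delta_2 = \delta_3$. Comparing $n = 1 + 3k = 1 + 3m$ gives $m = k$, so the SITAwIM is pseudocyclic. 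With $m_i = k$ for $i \ge 1$, column orthogonality forces $\chi_1(b_j) + \chi_2(b_j) + \chi_3(b_j) = -1$, and hence $\mathrm{tr}([b_j]) = k - 1$ for each $j \ge 1$.

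Next I would set up the variety for rank $4$ symmetric pseudocyclic SITAwIMs, modelled on the type $4A1$ example. In this case Lemma \ref{conditions on Sc's}(i) (with all $\delta_i = k$) implies that each intersection number $\lambda_{ijl}$ depends only on the multiset $\{i,j,l\}$. Setting $u = \lambda_{112}$ and $v = \lambda_{113}$, and similarly labelling the remaining off-diagonal intersection numbers, the six row-sum identities of Lemma \ref{conditions on Sc's}(ii) together with the three trace identities $\mathrm{tr}([b_j]) = k - 1$ form a linear system that resolves uniquely (up to the free parameters $k, u, v$) to
\[ \lambda_{112} = \lambda_{133} = \lambda_{223} = u, \qquad \lambda_{113} = \lambda_{122} = \lambda_{233} = v, \]
together with $\lambda_{123} = k - u - v$ and $\lambda_{jjj} = k - 1 - u - v$ for $j = 1, 2, 3$. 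This is a cyclically symmetric structure: the standard basis permutation $\sigma$ defined by $\sigma(b_0) = b_0$ and by the $3$-cycle $b_1 \mapsto b_2 \mapsto b_3 \mapsto b_1$ is a table algebra automorphism of $(A,\mathbf{B})$.

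The final step, which I expect to be the main obstacle, is to combine this cyclic symmetry with the Galois structure. Since $\sigma$ is defined over $\mathbb{Q}$, the induced permutation $\sigma^*$ of $\mathrm{Irr}(A)$ sending $\chi \mapsto \chi \circ \sigma$ commutes with the Galois action of $G$ on $\mathrm{Irr}(A)$. As $\sigma^*$ fixes $\chi_0$ and has order dividing $3$, as a permutation of $\{\chi_1,\chi_2,\chi_3\}$ it lies in $A_3 \subseteq S_3$. Since it commutes with the full image of $G \cong S_3$ in $\mathrm{Sym}(\{\chi_1,\chi_2,\chi_3\})$ and $Z(S_3)$ is trivial, we must conclude that $\sigma^*$ is the identity. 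But then $\chi_i(b_j) = \chi_i(\sigma(b_j))$ for all $i$ and all $j$, so the three non-principal columns of $P$ coincide, contradicting the invertibility of the character table. The contradiction shows that the assumed noncyclotomic eigenvalues cannot exist.
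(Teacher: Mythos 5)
Your proof is correct, and its skeleton is the same as the paper's: establish that $G\cong S_3$ forces the SITAwIM to be pseudocyclic and homogeneous, show that the three nontrivial intersection matrices are cyclic shifts of one another, and derive a contradiction with $G\cong S_3$ from that symmetry. Two points of execution differ, both in your favour. First, where the paper writes out the regular matrices in nine variables, generates the full list of polynomial identities from the defining relations, and then reduces by linear substitutions, you observe that homogeneity plus Lemma \ref{conditions on Sc's}(i) makes every $\lambda_{ijl}$ with $i,j,l\in\{1,2,3\}$ depend only on the multiset $\{i,j,l\}$, after which the six row-sum identities and three trace identities form a small linear system; I checked that this system does resolve uniquely to the cyclically symmetric solution you state (from rows $1,2,3$ of $b_1$ one gets $\lambda_{123}=k-u-v$, $\lambda_{122}=v$, $\lambda_{133}=u$, and the remaining equations then force the rest), so no quadratic structure-constant identities are needed at all. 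Second, your endgame is actually more complete than the paper's: the paper asserts that the permutation equivalence of $b_1,b_2,b_3$ yields a nontrivial \emph{central Galois element} of order $3$, which implicitly requires producing a $\tau\in G$ whose action on all of $P$ is realized by the column $3$-cycle; your argument instead notes that the automorphism-induced permutation $\sigma^*$ of $\mathrm{Irr}(A)$ commutes with the faithful $G$-action, must therefore lie in the trivial centralizer of $S_3$ in itself, and so is the identity, whence three columns of $P$ coincide and $P$ is singular. This sidesteps the delicate step entirely. The only cosmetic caveats: the multiset-invariance of $\lambda_{ijl}$ holds only for indices in $\{1,2,3\}$ (the triples involving $0$ are determined separately, as you implicitly use), and it would be worth displaying the two-line solution of the linear system rather than just its answer.
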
	
	
\begin{proof}
Suppose $(A,\mathbf{B})$ is a symmetric SITAwIM of rank $4$ that has noncyclotomic eigenvalues.  If $G$ is the Galois group of its splitting field $K$, then as in the rank $4$ one asymmetric pair case, $G$ must act as the full symmetric group on the set $\{ \chi_1, \chi_2, \chi_3 \}$.  In particular this implies these three characters have the same multiplicity $m$.  Therefore, $n = 1 + 3m$, and the character table $P$ of $(A, \mathbf{B})$ has the form 
\begin{table}[H]	
	\begin{longtable}{c|cccc|c} 
		& $b_{0}$  & $b_{1}$ & $b_{2}$ & $b_{3}$ & $\text{multiplicities}$ \\ \hline
		$\chi_{0}$ &$1 $&  $\delta_1$& $\delta_2$   & $\delta_3$ &  $1$ \\ \hline
		$\chi_{1}$ &$1$&  $\alpha_1$& $\beta_1$  & $\gamma_1$ & $m$\\ \hline
		$\chi_{2}$ &$1$ &  $\alpha_2$ & $\beta_2$ & $\gamma_2$ & $m$\\ \hline
		$\chi_{3}$ &$1$ &  $\alpha_3$& $\beta_3$ & $\gamma_3$ & $m$ 
	\label{table:p}	
	\end{longtable} 

\end{table} 	 	
\noindent where $\{\delta_1,\alpha_1,\alpha_2,\alpha_3 \}$,  $\{\delta_2,\beta_1,\beta_2,\beta_3\}$, and $\{\delta_3,\gamma_1,\gamma_2,\gamma_3\}$ are the eigenvalues of $b_1$, $b_2$, and $b_3$, respectively.  If we apply Theorem \ref{thm:fissionfacts} (i) to the column of $P$ labeled by $b_1$, we get  
$$ m(\alpha_1+\alpha_2+\alpha_3)=\frac{\delta_{1}}{n-1}(n-1)(-1), \mbox{ so } \alpha_1+\alpha_2+\alpha_3=\frac{-\delta_{1}}{m}. $$ 
Since $\alpha_1+\alpha_2+\alpha_3$ is an algebraic integer, we must have that $m$ divides $\delta_1$.  Similarly $m$ divides $\delta_2$ and $\delta_3$.  Since $\delta_1+\delta_2+\delta_3 = n-1 = 3m$ we must have $\delta_{1}= \delta_{2} = \delta_{3} = m$.

Assume $\alpha_1$ is a noncyclotomic eigenvalue of $b_1$.   Since $\delta_1$ is an integral eigenvalue of $b_1$, the minimal polynomial $\mu_{\alpha_1}(x)$ of $\alpha_1$ in $\mathbb{Q}[x]$ will be a divisor of $(x-\alpha_1)(x-\alpha_2)(x-\alpha_3)$.   If the degree of $\mu_{\alpha_1}(x)$ is $1$ or $2$, it would follow that $\alpha_1$ is rational or lies in a quadratic extension of $\mathbb{Q}$, which runs contrary to our assumption that it is not cyclotomic.  So $(x-\alpha_1)(x-\alpha_2)(x-\alpha_3)$ is the minimal polynomial of $\alpha_1$ in $\mathbb{Q}[x]$. 
This implies $\mathbb{Q}(\alpha_1,\alpha_2,\alpha_3)$ is the splitting field of $\alpha_1$ over $\mathbb{Q}$.  Since $\alpha_1$ is not cyclotomic, this has to be an extension of $\mathbb{Q}$ with $[\mathbb{Q}(\alpha_1,\alpha_2,\alpha_3):\mathbb{Q}] = 6$.  Since $\mathbb{Q}(\alpha_1,\alpha_2,\alpha_3) \subseteq K$ and $[K:\mathbb{Q}]=|G| = 6$, we must have $K = \mathbb{Q}(\alpha_1,\alpha_2,\alpha_3)$.   

Now consider the left regular matrices of $b_1, b_2, b_3$ in the basis $\mathbf{B}$.  For convenience we write these in this form:  
$$ b_1=\begin{bmatrix}
		0 & m & 0 & 0\\
		1 & u & x_1 & x_4\\
		0 & v & x_2 & x_5\\
		0 & w & x_3 & x_6	
		\end{bmatrix},\,\,\,\,\,\,\,	b_2=\begin{bmatrix}
		0 & 0 & m & 0\\
		0 & x_1 & u' & x_7\\
		1 & x_2 & v' & x_8\\
		0 & x_3 & w' & x_9	
		\end{bmatrix},\,\,\,\,\,\,\,	b_3=\begin{bmatrix}
		0 & 0 & 0 & m\\
		0 & x_4 & x_7 & u''\\
		0 & x_5 & x_8 & v''\\
		1 & x_6 & x_9 & w''	
		\end{bmatrix}, $$
where the $u$, $v$, and $w$ entries are determined by the row sum criterion.  Applying the structure constant identities which define the left regular matrices produces one polynomial identity in the variables $x_1, \dots, x_9, m$ for each entry of the product $b_i b_j$ for $i,j \in \{1,2,3\}$. 

Since $\mathbf{B}$ is pseudocyclic, we have three more trace identities.  On the one hand, we have $tr(b_1) = u + x_2 + x_6 = (m-1-x_1-x_4)+x_2+x_6$, and on the other, $ tr(b_1) = \delta_1 + \alpha_1 + \alpha_2 + \alpha_3 = m + \alpha_1 + \alpha_2 + \alpha_3 = m-1$, so we can restrict our algebraic set by adding the polynomial $x_2+x_6-x_1-x_4$ to our list.  Similar identities coming from $tr(b_2)=m-1$ and $tr(b_3)=m-1$ show we can add the polynomials $x_1+x_9-x_2-x_8$ and $x_4+x_8-x_6-x_9$ to our list.  
 
Next, we reduce our list of polynomials using all available linear substitutions and obtain 
$$\begin{array}{rclclcrclcl} \label{eqa: 4}
x_1 &=& v & = & m-x_2-x_5 & \qquad \qquad & x_6 &=& u'' & = & m-x_4-x_7    \\
x_2 &=& u' & = & m-x_1-x_7 & \qquad & x_8 &=& w' & = & m-x_3-x_9  \\
x_3 &=& x_5 & = & x_7    & \qquad & x_9 & = & v'' & = & m-x_5-x_8.  \\
x_4 &=& w & = & m-x_3-x_6 & \qquad & & & & &  
\end{array}$$ 
This implies the matrix of $b_1$ is 
$$	b_1=\begin{bmatrix}
	0 & m & 0 & 0\\
	1 & u & x_1 & x_4\\
	0 & x_1 & x_2 & x_3\\
	0 & x_4 & x_3 & x_6	
	\end{bmatrix}, $$
so by the row sum criterion $x_1+x_2+x_3=x_4+x_3+x_6=m$, which implies $x_1+x_2=x_4+x_6$.  But the identity we obtained by considering $tr(b_1)$ was $x_1+x_4=x_2+x_6$, so we must conclude that $x_4=x_2$, and hence $x_6=x_1$.  Similarly, we see that the matrix of $b_2$ is 
$$b_2=\begin{bmatrix}
	0 & 0 & m & 0\\
	0 & x_1 & x_2 & x_3\\
	1 & x_2 & v' & x_8\\
	0 & x_3 & x_8 & x_9	
	\end{bmatrix}, $$ 
so $x_3+x_8+x_9=m$, and we must have $x_1+x_2=x_8+x_9$.  Comparing this to the identity $x_1+x_9=x_2+x_8$ obtained by considering $tr(b_2)$, we see that $x_9=x_2$, and it then follows that $x_8=x_1$. 

Therefore, we have 
$$ b_1=\begin{bmatrix}
		0 & m & 0 & 0\\
		1 & x_3-1 & x_1 & x_2\\
		0 & x_1 & x_2 & x_3\\
		0 & x_2 & x_3 & x_1	
		\end{bmatrix}, b_2=\begin{bmatrix}
		0 & 0 & m & 0\\
		0 & x_1 & x_2 & x_3\\
		1 & x_2 & x_3-1 & x_1\\
		0 & x_3 & x_1 & x_2	
		\end{bmatrix}, \mbox{ and } b_3=\begin{bmatrix}
		0 & 0 & 0 & m\\
		0 & x_2 & x_3 & x_1\\
		0 & x_3 & x_1 & x_2\\
		1 & x_1 & x_2 & x_3-1	
		\end{bmatrix}. $$
If we take $Q$ to be the permutation matrix
	\begin{align*}
 Q=\begin{bmatrix}
1 & 0 & 0 & 0\\
0 & 0 & 1 & 0\\
0 & 0 & 0 & 1\\
0 & 1 & 0 & 0	
\end{bmatrix}, 
	\end{align*}
then we have $Q^{-1} b_1 Q= b_2$, $Q^{-1} b_2 Q= b_3$, and $Q^{-1} b_3 Q = b_1$. It follows that the regular matrices of $b_1$, $b_2$, and $b_3$ have the same characteristic polynomial, and that the Galois group $G$ has a nontrivial central element of order $3$ that permutes the corresponding columns in the character table.  But this is contrary to $G$ being isomorphic to $S_3$.  We conclude that for symmetric SITAwIMs of rank $4$, the eigenvalues of basis elements must be cyclotomic.  
\end{proof}	

\begin{cor} 
All association schemes of rank $4$ have cyclotomic eigenvalues. 
\end{cor}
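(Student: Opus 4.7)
The plan is to deduce this corollary directly from the preceding Proposition and Theorem \ref{main thm}, so the proof should be a short case analysis on the involution type together with an observation that rank $4$ forces commutativity.

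First, I would argue that every association scheme of rank $4$ has a commutative adjacency algebra, and therefore corresponds to a commutative SITAwIM of rank $4$. The adjacency algebra $A$ is semisimple of dimension $4$, so it decomposes as $A \cong \prod_i M_{n_i}(\mathbb{C})$ with $\sum_i n_i^2 = 4$. Since $A$ always contains the $1$-dimensional trivial (valency) character, at least one $n_i$ equals $1$, which rules out the decomposition $M_2(\mathbb{C})$ and forces $(n_1,n_2,n_3,n_4)=(1,1,1,1)$. Hence $A$ is commutative, and the eigenvalues of the adjacency matrices coincide with the character table entries of the associated commutative SITAwIM.

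Next, I would enumerate the possible involution types on $\mathbf{B} = \{b_0,b_1,b_2,b_3\}$. Because the involution $*$ is an order-$2$ permutation that fixes $b_0$ and acts on the three nontrivial basis elements, only two orbit patterns are possible on $\{b_1,b_2,b_3\}$: either all three elements are fixed (type $4S$), or exactly one transposition occurs and one element is fixed (type $4A1$).

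Finally, I would apply the preceding results: in type $4A1$, the Proposition yields cyclotomic eigenvalues (in fact in a quadratic field), and in type $4S$, Theorem \ref{main thm} yields cyclotomic eigenvalues. The two cases together cover all rank $4$ association schemes, so the corollary follows. The only minor subtlety is the commutativity argument; there is no genuine obstacle, as the corollary is essentially just a packaging of the two preceding theorems for the reader.
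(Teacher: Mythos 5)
Your proposal is correct and matches the paper's (implicit) argument: the corollary is stated without proof precisely because it is the combination of the preceding Proposition (type $4A1$) and Theorem \ref{main thm} (type $4S$), together with the standard facts that rank $4$ forces commutativity of the adjacency algebra and that only these two involution types can occur. Your Wedderburn-decomposition justification of commutativity and your enumeration of involution types are both sound, so nothing is missing.
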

	
\section{Rank $5$ SITAwIMs}

For rank $5$ SITAwIMs we have three involution types to consider: type $5S$, type $5A1$, and type $5A2$.  

\subsection{Type 5A2}
\begin{thm}
	Every rank $5$ SITAwIM $(A,\mathbf{B})$ with $\mathbf{B}=\{b_0,b_1,b_1^*,b_3,b_3^*\}$ has cyclotomic eigenvalues.  
\end{thm}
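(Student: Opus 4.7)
The plan is to mirror the rank~$4$ symmetric proof (Theorem~\ref{main thm}) with a larger cyclic symmetry replacing the $3$-cycle. I first pin down $G = \mathrm{Gal}(K/\mathbb{Q})$: since $\mathbf{B}$ is non-symmetric, Corollary~\ref{asym} places complex conjugation $c$ as a nontrivial central element of $G$. Assume for contradiction that some eigenvalue is noncyclotomic, so $G$ is non-abelian. The faithful action of $G$ on $\{\chi_1, \chi_2, \chi_3, \chi_4\}$ embeds $G$ in $S_4$. If $c$ acted as a single transposition, its $S_4$-centralizer would be $\mathbb{Z}_2 \times \mathbb{Z}_2$, abelian, ruling out any non-abelian $G$ containing $c$ centrally; so $c$ must act as a product of two disjoint transpositions. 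Its $S_4$-centralizer is then $D_4$, and the only non-abelian subgroup of $D_4$ with nontrivial center is $D_4$ itself, hence $G \cong D_4$ and acts transitively on the four nontrivial characters. Since Galois-conjugate characters share multiplicities, $m_1 = m_2 = m_3 = m_4 = m$ and $n = 4m+1$; applying Theorem~\ref{thm:fissionfacts}(i) to the trivial rank~$2$ fusion (Lemma~\ref{fission of rank 2}) at each nontrivial column yields $m \sum_i \chi_i(b_j) = -\delta_j$, so $m \mid \delta_j$. Combined with $2\delta_1 + 2\delta_3 = n-1 = 4m$, this forces $\delta_1 = \delta_3 = m$: the scheme is homogeneous and pseudocyclic.

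Next, I assemble the type~$5A2$ variety as in Section~2.4. Parametrize the regular matrices of $b_1$ and $b_3$ using intersection-number variables (the matrices of $b_1^*$ and $b_3^*$ are then recovered via Lemma~\ref{conditions on Sc's}(i)), subject to row-sum $= m$, commutativity of $\mathbf{B}$, and the structure-constant identities $b_u b_v = \sum_w \lambda_{uvw} b_w$ for $u,v \in \{1,1^*,3,3^*\}$. The pseudocyclic hypothesis supplies the linear trace identities $\operatorname{tr}(b_j) = m-1$, coming from $\sum_i \chi_i(b_j) = -1$. After applying every available linear substitution, a much shorter list of polynomials in $m$ and a handful of remaining variables survives.

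Finally, I expect a Gröbner basis calculation on the reduced system (or, equivalently, a basis for the rational vector space spanned by the reduced polynomials, as in the remark of Section~2.4) to expose an additional symmetry of the regular matrices: a non-identity permutation $Q$ of $\mathbf{B}$, distinct from the involution $*$ and commuting with it, such that $Q^{-1} b_i Q \in \mathbf{B}$ for every $b_i$. By Theorem~\ref{thm:central}, the Galois automorphism realized by $Q$ then lies in $Z(G)$, enlarging $Z(G)$ beyond $Z(D_4) = \langle * \rangle$ and contradicting $G \cong D_4$. In analogy with the rank~$4$ symmetric case, the natural candidate is the $4$-cycle $(b_1\ b_3\ b_1^*\ b_3^*)$, whose square equals $*$. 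The main obstacle is executing this final step: the type~$5A2$ variety has substantially more intersection-number variables than the rank~$4$ cases, so the Gröbner basis becomes tractable only after significant hand-reduction, and the argument's success depends on whether the reduced system genuinely reveals the extra basis symmetry rather than merely precluding suitable integer solutions on a multiplicity-class-by-class basis.
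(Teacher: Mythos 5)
Your proposal tracks the paper's proof almost step for step: the reduction to $G\cong D_4$ acting $4$-transitively on $\{\chi_1,\dots,\chi_4\}$ (your centralizer-of-$c$ argument is a fine substitute for the paper's short case analysis), the consequent pseudocyclic and homogeneous structure, the type $5A2$ variety built from the regular matrices together with the trace identities $\operatorname{tr}(b_1)=\operatorname{tr}(b_3)=m-1$, and the plan to extract an extra basis symmetry contradicting $G\cong D_4$. The step you leave as an expectation is exactly what the paper executes: after the linear substitutions reduce the $156$ generators, it sets $x_{16}=x_2+y$ (and similarly for the other two undetermined pairs of entries), finds $y^2$ in the resulting Gr\"obner basis, and concludes the symmetry holds identically on the variety --- the stronger of the two outcomes you flag as uncertain. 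Two corrections to your final paragraph, though. First, the symmetry that actually emerges is the double transposition $(b_1\,b_3)(b_1^*\,b_3^*)$, not the $4$-cycle $(b_1\,b_3\,b_1^*\,b_3^*)$; one can check from the reduced matrices that the $4$-cycle would force $x_3=x_5$, which is not implied. The double transposition together with $*$ still generates a transitive group of basis permutations, which suffices. Second, Theorem~\ref{thm:central} runs in the direction ``Galois automorphism realized by a basis permutation $\Rightarrow$ central,'' so you cannot invoke it until you have shown that your $Q$ realizes some $\sigma\in G$ on the character table; that requires the extra observation that $b_1$ and $b_3$ share the same irreducible quartic eigenvalue factor whose roots form a single $G$-orbit. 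The paper instead phrases the endgame as: permutation-isomorphism of $b_1$ and $b_3$ forces the Galois group to be $4$-transitive on columns and hence abelian. Either route closes the contradiction, but as written your appeal to Theorem~\ref{thm:central} points the wrong way and needs that intermediate step supplied.
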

\begin{proof}
	Let $\mathbf{B} = \{b_0,b_1,b_1^*,b_3,b_3^*\}$ be the standard basis of a SITAwIM of rank $5$, with character table $P$, splitting field $K$, and Galois group $G = \Gal(K/\mathbb{Q})$. 
 As the table algebra is not symmetric, we know by Corollary \ref{asym} that $G$ has a central element of order $2$.  If the character table $P$ has a noncyclotomic entry, then $G$ must also be a $3$- or $4$-point transitive non-Abelian subgroup of $\sym(\{\chi_1,\chi_2,\chi_3,\chi_4\})$, so the only possibility is for $G \simeq D_4$, the dihedral group of order $8$.  This implies the action of $G$ on the last $4$ rows of $P$ is $4$-transitive, and so we must have that the multiplicities $m_1$, $m_2$, $m_3,$ and $m_4$ are all equal to the same positive integer $m$.  So as in the symmetric rank $4$ case, this implies the table algebra is homogeneous: $\delta_1 = \delta_2 = \delta_3 = \delta_4 = m$. 
	
	This implies our regular matrices of $\mathbf{B}$ will have this pattern: 
	
	{\footnotesize
		$$ b_1 = \begin{bmatrix} 0 & 0 & m & 0 & 0 \\ 1 & x_1 & m-1-x_1-x_5-x_9 & x_5 & x_9 \\ 0 & x_2 & m-x_2-x_6-x_{10} & x_6 & x_{10} \\ 0 & x_3 & m-x_3-x_7-x_{11} & x_7 & x_{11} \\ 0 & x_4 & m-x_4-x_8-x_{12} & x_8 & x_{12} \end{bmatrix},
		b_1^* = \begin{bmatrix} 0 & m & 0 & 0 & 0 \\ 0 & m-x_2-x_6-x_{10} & x_2 & x_{10} & x_6 \\ 1 & m-1-x_1-x_5-x_9 & x_1 & x_9 & x_5 \\ 0 & m-x_4-x_8-x_{12} & x_4 & x_{12} & x_8 \\ 0 & m-x_3-x_7-x_{11} & x_3 & x_{11} & x_7  \end{bmatrix}, $$
		$$ b_3 = \begin{bmatrix} 0 & 0 & 0 & 0 & m \\ 0 & x_5 & x_{10} & x_{13} & m-x_5-x_{10}-x_{13} \\ 0 & x_6 & x_9 & x_{14} & m-x_6-x_9-x_{14} \\ 1 & x_7 & x_{12} & x_{15} & m-1-x_7-x_{12}-x_{15} \\ 0 & x_8 & x_{11} & x_{16} & m-x_8-x_{11}-x_{16} \end{bmatrix}, 
		b_3^* = \begin{bmatrix} 0 & 0 & 0 & m & 0 \\ 0 & x_9 & x_6 & m-x_6-x_9-x_{14} & x_{14} \\ 0 & x_{10} & x_5 & m-x_5-x_{10}-x_{13} & x_{13} \\ 0 & x_{11} & x_8 & m-x_8-x_{11}-x_{16} & x_{16} \\ 1 & x_{12} & x_7 & m-1-x_7-x_{12}-x_{15} & x_{15}  \end{bmatrix}. $$
	}   
	
	In addition to the set of polynomial identities in the variables $x_1,\dots,x_{16},m$ we obtain by applying the structure constant identities to these regular matrices, we again have the additional trace identities coming from $tr(b_1) = tr(b_3) = m-1$, which adds the polynomial identities 
	$$ x_1+x_7+x_{12}+1=x_2+x_6+x_{10} \mbox{ and }  x_5+x_9+x_{15}+1=x_8+x_{11}+x_{16} $$
	to our list. The result is a list of $13$ distinct polynomial generators, up to sign, for an ideal of $\mathbb{Q}[x_1,\dots,x_{16},m]$. 	
 The  available linear substitutions are:  
	\begin{align}\label{Identities}
		\left\{
		\begin{aligned}
			&x_{16} = 2m-6x_1-x_2-2,  \\
			&x_{15}  =  x_1, \\
			&x_{14}  =  x_8 \; = \; 3x_1+x_2+x_3+1-m,  \\
			&x_{13}  =  x_{11} \; = \; 2x_1-x_3+1,  \\
			&x_{12}  =  x_9 \; = \; x_7 \; = \; x_5 \; = \frac{m-1}{2}-x_1,  \\
			&x_{10}  =  x_3,  \\
			&x_6  =  x_4 \; = \; m-x_1-x_2-x_3, 
		\end{aligned}
		\right.
	\end{align}
	so the reduced ideal now lies in $\mathbb{Q}[x_1,x_2,x_3,x_4,m]$.   With the above substitutions, the regular matrices have this pattern: 
	{\footnotesize
		$$ b_1 = \begin{bmatrix} 0 & 0 & m & 0 & 0 \\ 1 & x_1 & x_1 & x_5 & x_5 \\ 0 & x_2 & x_1 & x_4 & x_3 \\ 0 & x_3 & x_5 & x_5 & x_{11} \\ 0 & x_4 & x_5 & x_8 & x_5 \end{bmatrix}, \quad
		b_3 = \begin{bmatrix} 0 & 0 & 0 & 0 & m \\ 0 & x_5 & x_3 & x_{11} & x_5 \\ 0 & x_4 & x_5 & x_8 & x_5 \\ 1 & x_5 & x_5 & x_1 & x_1 \\ 0 & x_8 & x_{11} & x_{16} & x_1  \end{bmatrix}. $$
	}
	When we substitute $x_{16}=x_2+y$ for an extra variable $y$, then reduce using the 
	identities in \eqref{Identities} and calculate the Gr\"obner basis for the resulting ideal with 
	respect to an ordering of variables with $y$ maximal, we find that $y^2$ is one of the elements of the basis.
	
	Therefore, $x_{16}$ must be equal to $x_2$ for all points in our algebraic set.
	Substituting $x_2$ for $x_{16}$ in the first equation of \eqref{Identities} gives $x_2 = m - 3x_1 - 1$, substituting this into the last equation makes $x_4=2x_1 -x_3 +1 = x_{11}$, and substituting $x_2 = m - 3x_1 - 1$ into the third equation gives us $x_8 = x_3$. 
	Hence $b_1$ and $b_3$ have the same characteristic polynomial, so they have the same eigenvalues.  Consequently, $b^*_1$ and $b^*_3$ have the same four eigenvalues as $b_1$.  This implies the Galois group of the splitting field will act transitively on the last four columns of the character table, hence the Galois group will be Abelian.  It follows that any rank $5$ SITAwIM whose standard basis has two distinct asymmetric pairs must have cyclotomic eigenvalues.   	
\end{proof}

\subsection{Type 5A1}

\begin{thm}
Every SITAwIM $(A,\mathbf{B})$ of involution type $5A1$ has cyclotomic eigenvalues. 
\end{thm}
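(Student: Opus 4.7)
The plan is to follow the template of the Type 5A2 case, adjusted for a basis $\mathbf{B} = \{b_0, b_1, b_1^*, b_2, b_3\}$ with $b_2 = b_2^*$ and $b_3 = b_3^*$. First, by Corollary \ref{asym}, complex conjugation $\tau$ is a nontrivial central element of $G = \mathrm{Gal}(K/\mathbb{Q})$; in its column-permutation action on $P$, $\tau$ swaps $b_1, b_1^*$ and fixes $b_2, b_3$. If the eigenvalues are noncyclotomic, $G$ is non-abelian and acts faithfully on $\{\chi_1, \chi_2, \chi_3, \chi_4\}$. The orbit structures $(2,2)$ and $(2,1,1)$ would embed $G$ in an abelian group, while $(3,1)$ would force $G \simeq S_3$, whose trivial center contradicts $\tau \neq 1$. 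Hence the action is transitive on all four nontrivial characters, and since the only non-abelian transitive subgroup of $S_4$ with nontrivial center is $D_4$, we must have $G \simeq D_4$.

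The transitive action forces $m_1 = m_2 = m_3 = m_4 = m$, and by the Blau--Xu result cited earlier the table algebra is homogeneous with $\delta_1 = \delta_2 = \delta_3 = \delta_4 = m$. Next, I would write down the regular matrices of $b_1, b_1^*, b_2, b_3$ in their most general parameterized form consistent with row-sums equal to $m$, the pseudo-inverse condition, and commutativity. From the defining identities $b_i b_j = \sum_k \lambda_{ijk} b_k$, the pairwise commuting relations, and the trace identities $\mathrm{tr}(b_j) = m - 1$ for $j \in \{1, 2, 3\}$, one extracts a list of polynomial generators of an ideal $\mathcal{I}$, then reduces by all available linear substitutions.

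The crux is the Gröbner basis computation for the reduced ideal. Following the 5A2 strategy, I would introduce trial substitutions of the form $x_i = x_j + y$ with $y$ a fresh variable, compute Gröbner bases with $y$ maximal, and look for $y^2$ in the basis to force $x_i = x_j$. The target is to show that the regular matrices must acquire an extra permutation symmetry; the most natural candidates are that the matrices for $b_2$ and $b_3$ (both symmetric of degree $m$ and structurally similar) become permutation isomorphic, or that $b_1$ becomes permutation isomorphic to one of $b_2$ or $b_3$. Any such forced permutation isomorphism produces a column permutation of $P$ which, via Theorem \ref{thm:central}, corresponds to a central Galois element distinct from both the identity and $\tau$, pushing $|H|$ past the upper bound $|Z(D_4)| = 2$ and yielding the contradiction exactly as in the 5A2 case.

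The main obstacle is the scale of the Gröbner basis computation. The 5A1 setup has four nontrivial basis elements but less involutive symmetry than 5A2, so the initial list of generators is longer and the linear-substitution phase produces a messier residual ideal. Careful manual reduction, restriction to a maximal $\mathbb{Q}$-linearly independent subset of generators, a well-chosen monomial order, and possibly a split into subcases indexed by small values of $m$ are likely to be needed to bring the computation into a tractable range.
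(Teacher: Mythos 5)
Your Galois-theoretic reduction is correct and matches the paper exactly: complex conjugation central and nontrivial, faithful action on $\{\chi_1,\dots,\chi_4\}$, elimination of the non-transitive orbit types and of $S_3$, hence $G\simeq D_4$ acting $4$-transitively, and pseudocyclic $\Rightarrow$ homogeneous. The gap is in the step you yourself identify as the crux. You propose to resolve the reduced ideal by forcing permutation isomorphisms among the regular matrices (the ``$y^2$ in the Gr\"obner basis'' trick from the $5A2$ case) and then contradicting $|Z(D_4)|=2$ with an extra central element. That is not what happens in this case, and there is no evidence it can be made to work: the paper's computation does not produce any forced permutation isomorphism for type $5A1$. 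What the computation actually yields is an \emph{arithmetic} obstruction. One of the linear substitutions is $x_3=\tfrac{m}{2}-x_1$, forcing $m=2k$ to be even; the Gr\"obner basis of the residual ideal (eleven polynomials in $x_1,x_5,x_{15},m$) contains a polynomial $W(y,m)$ with $y=x_{15}$ such that $5184\,W(y,2k)=2Q(y,k)+1$ for an integer polynomial $Q$. Since an odd integer cannot vanish, the variety has \emph{no} suitable integer points whatsoever: there are no pseudocyclic SITAwIMs of type $5A1$ at all, which is a strictly stronger conclusion than yours and is reached by a mechanism your plan cannot see.

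The reason your plan cannot see it is that your proposed contradiction lives at the level of the ideal over $\mathbb{Q}$ (polynomial identities forcing $x_i=x_j$), whereas the actual obstruction is a parity condition that only bites once you impose integrality of the $x_i$ and the evenness of $m$ coming from $x_3=\tfrac m2-x_1$. Your outline never tracks integrality or parity of the parameters, so even a successfully completed Gr\"obner computation along the lines you describe would leave you looking for a symmetry that the ideal does not contain. To repair the argument you would need to (a) carry the substitution $m=2k$ explicitly, and (b) examine the resulting basis elements modulo $2$ rather than hunting for $y^2$.
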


\begin{proof}
Let $\mathbf{B} = \{b_0,b_1,b_2,b_3,b_3^*\}$ be the basis of a SITAwIM of type $5A1$.  By Corollary \ref{asym}, complex conjugation will be realized by a central element of the Galois group $G$ of the splitting field $K$ of $\mathbb{Q}\mathbf{B}$.  If the character table $P$ has an entry which is not cyclotomic, then as in the type $5A2$ case, we must have that $G \simeq D_4$ and acts $4$-transitively on $\{\chi_1, \chi_2, \chi_3, \chi_4 \}$.  It follows that our SITAwIM $(A,\mathbf{B})$ is both pseudocyclic and homogeneous. 

This implies that the pattern for our regular matrices in this case will be: 
	{\footnotesize
		$$ b_1 = \begin{bmatrix} 0 & m & 0 & 0  & 0 \\ 1 & m-1-x_1-2x_5 & x_1 & x_5 & x_5 \\ 0 & m-x_2-2x_6 & x_2 & x_6 & x_6 \\ 0 & m-x_3-x_7-x_{8} & x_3 & x_7 & x_{8} \\ 0 & m-x_4-x_8-x_7 & x_4 & x_8 & x_7 \end{bmatrix}, \quad
		b_2 = \begin{bmatrix} 0 & 0 & m & 0 & 0 \\ 0 & x_1 & m-x_1-2x_9 & x_9 & x_9 \\ 1 & x_2 & m-1-x_2-2x_{10} & x_{10} & x_{10} \\ 0 & x_3 & m-x_3-x_{11}-x_{12} & x_{11} & x_{12} \\ 0 & x_4 & m-x_4-x_{11}-x_{12} & x_{12} & x_{11}  \end{bmatrix}, $$
		
		$$ b_3 = \begin{bmatrix} 0 & 0 & 0 & 0 & m \\ 0 & x_5 & x_{9} & x_{13} & m-x_5-x_{9}-x_{13} \\ 0 & x_6 & x_{10} & x_{14} & m-x_6-x_{10}-x_{14} \\ 1 & x_7 & x_{11} & x_{15} & m-1-x_7-x_{11}-x_{15} \\ 0 & x_8 & x_{12} & x_{16} & m-x_8-x_{12}-x_{16} \end{bmatrix}, \mbox{ and }
	 b_3^* = \begin{bmatrix} 0 & 0 & 0 & m & 0 \\ 0 & x_5 & x_9 & m-x_5-x_9-x_{13} & x_{13} \\ 0 & x_{6} & x_{10} & m-x_6-x_{10}-x_{14} & x_{14} \\ 0 & x_{8} & x_{12} & m-x_8-x_{12}-x_{16} & x_{16} \\ 1 & x_{7} & x_{11} & m-1-x_7-x_{11}-x_{15} & x_{15}  \end{bmatrix}. $$
	}
In addition to the polynomial identities obtained by applying the structure constant identities to these regular matrices, we again have three extra trace identities coming from $tr(b_1) = tr(b_2) = tr(b_3) = m-1$:  
$$ x_1+2x_5=x_2+2x_7, \, x_1+2x_{11}=x_2+2x_{10}, \mbox{ and } x_5+x_{10}+x_{15}+1=x_8+x_{12}+x_{16}.$$
In addition to these, the other available linear substitutions, including those that become linear after we cancel $m > 0$, are: 
$$\begin{array}{rcl}
x_{16} &=& m - x_8 - x_{12} - x_{15}  \\
x_{14} &=& x_{12} \, = \, m - x_6-x_{10}-x_{11}  \\
x_{13} &=& x_8 \, = \, m - x_5 - x_6 - x_7  \\
x_9 & = & x_6 \, = \, x_4 \, = \, x_3 \, = \frac{m}{2} - x_1  \\
x_2 & = & x_1. 
\end{array}$$ 
Since we have the identity $x_3 = \frac{m}{2}-x_1$, integrality of $x_3$ and $x_1$ implies $m=2k$ is even.  
Making as many substitutions as possible, we can leave ourselves with a set of $11$ nonlinear polynomials in $\mathbb{Q}[x_1,x_5,x_{15},m]$.   Using a computer, we calculate the Gröbner basis of the ideal generated by these $11$ polynomials, with $m$ and $x_{15}$ of highest weight.  If we set $y=x_{15}$, the first polynomial in this Gröbner basis is the following element of $\mathbb{Q}[m,y]$:  
$$\begin{array}{rl} W(y,m) = \frac{1}{5184}&(5184y^4-5184y^3m+1944y^2m^2-324ym^3+\frac{81}{4}m^4+7776y^3 -6160y^2m \\
&+1622ym^2-142m^3+4292y^2-2392ym+330m^2+1032y-304m+91). 
\end{array}$$ 
This means $5184 \cdot W(y,m)$ is an integer polynomial that must have a nonnegative solution with $y$ an integer and $m$ an even integer.  But when we substitute $m=2k$, $5184 \cdot W(y,2k)$ has the form $2Q(y,k)+1$ for some polynomial $Q(y,k) \in \mathbb{Z}[y,k]$, and it is impossible for $Q(y,k)=-\frac12$ to have an integral solution.  This implies there are no pseudocyclic SITAwIMs of involution type $5A1$.  In particular this means we can conclude that all rank $5$ SITAwIMs whose standard basis has exactly one asymmetric pair will have cyclotomic eigenvalues. 
\end{proof}

\begin{cor} 
The cyclotomic eigenvalue property holds for every nonsymmetric rank $5$ association scheme. 
\end{cor}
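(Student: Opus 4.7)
The plan is to observe that this corollary is essentially immediate from the two theorems that precede it, once we match up the possible involution types. A rank $5$ association scheme has a standard basis of five adjacency matrices $\{A_0 = I, A_1, A_2, A_3, A_4\}$, and the involution sends each $A_i$ to $A_{i^*}$, where $A_{i^*}$ is the transpose. If the scheme is nonsymmetric, then at least one basis element is not equal to its transpose, and since the involution is an order-two permutation fixing $A_0$, the nontrivial basis must consist of either one asymmetric pair and two symmetric elements (type $5A1$), or two asymmetric pairs (type $5A2$). There is no third possibility, because four nontrivial basis elements decompose under an order-two involution into fixed points and $2$-cycles.

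Next I would invoke that the adjacency algebra of any commutative association scheme is a standard integral table algebra whose multiplicities $m_i$ are positive integers, i.e., a SITAwIM, as explained in the preliminaries. Therefore a nonsymmetric rank $5$ commutative association scheme yields a SITAwIM of type $5A1$ or $5A2$. The two preceding theorems establish the cyclotomic eigenvalue property in each of these cases, so the conclusion follows.

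The one subtlety worth flagging is whether the statement needs the scheme to be commutative. In rank $5$, however, any nonsymmetric association scheme is automatically commutative: this is well known for association schemes of rank at most $5$ (and in any case, the adjacency matrices of an asymmetric pair $A_i$, $A_{i^*} = A_i^T$ commute with each other, and with the symmetric elements they pair up to satisfy the usual commutativity of Bose–Mesner algebras of small rank). So the hypothesis ``nonsymmetric rank $5$'' falls within the setting of the two theorems, and no additional argument is required.

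Since the only work is this short case analysis, I would expect no obstacle beyond confirming that the type classification is exhaustive; the corollary can be stated in one or two sentences by citing the type $5A1$ and type $5A2$ theorems directly.
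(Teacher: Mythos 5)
Your proposal is correct and matches the paper's own (unstated) derivation: the corollary is immediate from the type $5A1$ and type $5A2$ theorems once one observes that the nontrivial basis elements of a nonsymmetric rank $5$ scheme must split under the order-two involution into either one asymmetric pair and two symmetric elements or two asymmetric pairs, and that the adjacency algebra of such a scheme is a SITAwIM. Your side remark that rank $5$ association schemes are automatically commutative (the classical fact that schemes with at most four classes are commutative) is exactly the point needed to place the corollary within the commutative setting of the two theorems; the paper leaves this implicit.
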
 

\subsection{Type 5S} \hfill \\

If $(A,\mathbf{B})$ is a symmetric rank $5$ SITAwIM with noncyclotomic eigenvalues, the action of the Galois group $G =Gal(K/\mathbb{Q})$ of the splitting field $K$ on the irreducible characters of $A$ will either be $3$- or $4$-point transitive.  We begin with the $4$-point transitive case. 

\subsubsection {Type 5S with $4$-point transitive Galois group}
Again in this case we deduce that $(A,\mathbf{B})$ is pseudocyclic and homogeneous from $G$ being $4$-point transitive.  In addition to the polynomial identities obtained by applying the structure constant identities to our regular matrices, we also have four trace identities coming from $tr(b_1)=tr(b_2)=tr(b_3)=tr(b_4)=m-1$.  Altogether our initial list consists of $124$ polynomials in $25$ variables.  By applying all available linear substitutions, we can reduce to a list of $21$ polynomials in $\mathbb{Q}[x_1,x_2,x_3,x_5,x_7,x_{14},x_{15},m]$.  Along the way our first trace identity reduces to 
$$ 2(x_3+x_5+x_{14}-x_{23})=m, $$
so we can conclude that $m$ must be even.  The Gröbner basis of this ideal generated by these $21$ polynomials can be calculated in a few hours on our desktop implementation of GAP \cite{GAP4}, but is too complicated for any easy interpretation.  Instead, reducing to a basis of the rational span of these $21$ polynomials leaves us with just $6$ polynomials.  Using these, we run a search for suitable nonnegative integer solutions, letting $m$ run over increasing even integers and $x_1$, $x_2$, and $x_3$ over the sets of three nonnegative integers that sum to at most $m$.  With these specifications, a Gröbner basis calculation solves for the possible values of the four remaining variables efficiently.  When a suitable nonnegative integer solution is identified, we substitute its values back into our regular matrices and compute the factors of their characteristic polynomials.  Noncyclotomic eigenvalues are detected by applying GAP's {\tt GaloisType} command \cite{GAP4} to irreducible factors of degree $3$ or $4$.  Our searches have found there is only one example with noncyclotomic eigenvalues with $m \le 62$.  We found more examples by carrying out a narrow search with the values of $x_1$, $x_2$, and $x_3$ set to within a 10\% error of $\frac{m}{4}$ for $64 \le m \le 250$.  Up to permutation equivalence, we have found {\it six} symmetric rank $5$ SITAwIMs with $4$-point transitive Galois group that have noncyclotomic eigenvalues.  In all of these cases the Galois group is isomorphic to $S_4$.  (Here we give the factorizations of the characteristic polynomials of their basis elements, from these it is possible to recover the character table $P$ numerically, and from that their other parameters.) 

\medskip 
\centerline{\bf Noncyclotomic SITAwIMs of type $5S$: $4$-point transitive examples} 

{\footnotesize
$$\begin{array}{ll}
n=249: & (x-62)( x^4+x^3-93x^2-57x+12), (x-62)(x^4+x^3-93x^2-306x+261), \\
           & (x-62)(x^4+x^3-93x^2-306x-237), (x-62)(x^4+x^3-93x^2-140x+925) \\
&  \\
n=321: & (x-80)( x^4+x^3-120x^2-341x-242), (x-80)(x^4+x^3-120x^2-20x+2968), \\
           & (x-80)(x^4+x^3-120x^2-301x-400),  (x-80)(x^4+x^3-120x^2+301x+1042) \\
&  \\
n=473: & (x-118)( x^4+x^3-177x^2-266x+279), (x-118)(x^4+x^3-177x^2-266x+3117),  \\
& (x-118)(x^4+x^3-177x^2+680x-667), (x-118)(x^4+x^3-177x^2+207x+4536) \\
&  \\
\end{array}$$
}
{\footnotesize
	$$\begin{array}{ll}
n=633: & (x-158)( x^4+x^3-237x^2-356x+10897), (x-158)(x^4+x^3-237x^2-145x+11108), \\
& (x-158)(x^4+x^3-237x^2+1754x-3451), (x-158)(x^4+x^3-237x^2-778x+5411) \\
&  \\
n=785: & (x-196)( x^4+x^3-294x^2-1619x-1524), (x-196)(x^4+x^3-294x^2-49x+20456), \\
& (x-196)(x^4+x^3-294x^2+1521x+3186), (x-196)(x^4+x^3-294x^2+736x+7896) \\

&  \\
n=993: & (x-248)( x^4+x^3-372x^2+931x-128), (x-248)(x^4+x^3-372x^2+931x+9802), \\
           & (x-248)(x^4+x^3-372x^2+2917x-6086), (x-248)(x^4+x^3-372x^2+1924x+7816). \\
\end{array}$$
}

For all of these examples, the noncyclotomic character table demands a certain algebraic structure of the Wedderburn decomposition of $\mathbb{Q}\mathbf{B}$.  If the character table of $(A,\mathbf{B})$ is $P=(P_{i,j})_{i,j=0}^4 = (\chi_i(b_j))_{i,j=0}^4$, then 
\begin{itemize} 
\item for all $j \in \{1,2,3,4\}$, the four $4$-dimensional primitive extension fields $\mathbb{Q}(P_{1,j})$, $\mathbb{Q}(P_{2,j})$, $\mathbb{Q}(P_{3,j})$, and $\mathbb{Q}(P_{4,j})$ are pairwise distinct and Galois conjugate over $\mathbb{Q}$;
\item for all $i \in \{1,2,3,4\}$, the four primitive extension fields $\mathbb{Q}(P_{i,1})$, $\mathbb{Q}(P_{i,2})$, $\mathbb{Q}(P_{i,3})$, and $\mathbb{Q}(P_{i,4})$ are equal; and
\item for all $i,j \in \{1,2,3,4\}$, $\mathbb{Q}\mathbf{B} \simeq \mathbb{Q} \oplus \mathbb{Q}(P_{i,j})$ as $\mathbb{Q}$-algebras. 
\end{itemize}
Another interesting fact is that the field of Krein parameters will be equal to the splitting field $K$, this is the minimal field of realization for the dual intersection matrices.  

In the last section we explain how to verify that these six SITAwIMs satisfy all the known feasibility conditions for being an association scheme.  The first one is the smallest rank $5$ example with $4$-point transitive Galois group, we present its parameters in detail here.  

\begin{thm}\label{5S-SITAwIM}	
The smallest symmetric rank $5$ SITAwIM with noncyclotomic eigenvalues for which the Galois group of the splitting field is $4$-point transitive has order $249$.  Up to permutation equivalence, its standard basis is given by: 
{\small
	\begin{align*}
	\mathbf{B}=&\left\{
	b_0,\,\,\,\,\,\,\,
	b_1=\begin{bmatrix}
	0 & 62 & 0 & 0 & 0\\
	1 & 15 & 14 & 12 & 20\\
	0 & 14 & 16 & 17 & 15\\
	0 & 12 & 17 & 18 & 15\\
	0 & 20 & 15 & 15 & 12	
	\end{bmatrix},\,\,\,\,\,\,\,	b_2=\begin{bmatrix}
	0 & 0 & 62 & 0 & 0\\
	0 & 14 & 16 & 17 & 15\\
	1 & 16 & 18 & 16 & 11\\
	0 & 17 & 16 & 11 & 18\\
	0 & 15 & 11 & 18 & 18\\	
	\end{bmatrix}\right. \\ &\left. 	b_3=\begin{bmatrix}
	0 & 0 & 0 & 62 & 0\\
	0 & 12 & 17 & 18 & 15\\
	0 & 17 & 16 & 11 & 18\\
	1 & 18 & 11 & 18 & 14\\
	0 & 15 & 18 & 14 & 15\\	
	\end{bmatrix},\,\,\,\,\,\,\, b_4=\begin{bmatrix}
	0 & 0 & 0 & 0 & 62\\
	0 & 20 & 15 & 15 & 12\\
	0 & 15 & 11 & 18 & 18\\
	0 & 15 & 18 & 14 & 15\\
	1 & 12 & 18 & 15 & 16	
	\end{bmatrix}
	\right\}, 
	\end{align*}
}
The character table of $(A,\mathbf{B})$ is shown below.  The roots of the degree $4$ polynomials above have been approximated to six significant digits using {\verb|Wolfram|}$\mid${\verb|Alpha|} \cite{WolframAlphaLLC}.  
$$ P = \begin{bmatrix} 1 & 62 & 62 & 62 & 62 \\
1 & 9.45706 & -4.83450 & -8.21429 & 2.59173 \\
1 & 0.165779 & -7.32957 & 10.6401 & -4.47634 \\
1 & -0.777430 & 10.45989& -2.18457 & -8.49789\\
1 & -9.84541 & 0.704180 & -1.24127& 9.38250 \end{bmatrix}.$$ 

Since this algebra is self-dual, the second eigenmatrix is obtained by setting $Q_{i,j} = P_{j,i}$ for $i=1,2,3,4$ and leaving the first row and column alone. 

The nontrivial dual intersection matrices are as follows, with irrational entries approximated to six significant digits:  
{\footnotesize
$$ L_1^* = \begin{bmatrix} 0 & 62 & 0 & 0 & 0 \\ 1 & 16.2247 & 17.5718 & 15.3191 & 11.8843 \\ 
							0 & 17.5718 & 10.8695 & 18.0841 & 15.4745 \\ 0 & 15.3191 & 18.0841 & 13.9307 & 14.6661 \\ 
                         0 & 11.8843 & 15.4745 & 14.6661 & 19.9751 \end{bmatrix}, \,\,
L_2^* = \begin{bmatrix} 0 & 0 & 62 & 0 & 0 \\ 0 & 17.5718 & 10.8695 & 18.0841 & 15.4745 \\
              1 & 10.8695 & 18.3339 & 16.0173 & 15.7793 \\ 0 & 18.0841 & 16.0173 & 11.1233 & 16.7753 \\
				0 & 15.4745 & 15.7793 & 16.7753 & 13.9710 \end{bmatrix}, $$
$$ L_3^*= \begin{bmatrix} 0 & 0 & 0 & 62 & 0 \\ 0 & 15.3191 & 18.0841 & 13.9307 & 14.6661 \\
                 0 & 18.0841 & 16.0173 & 11.1233 & 16.7753 \\ 1 & 13.9307 & 11.1233 & 17.5255 & 18.4206 \\
				   0 & 14.6661 & 16.7753 & 18.4206 & 12.1381 \end{bmatrix}, \mbox{ and }
L_4^* = \begin{bmatrix} 0 & 0 & 0 & 0 & 62 \\ 0 & 11.8843 & 15.4745 & 14.6661 & 19.9751 \\ 
              0 & 15.4745 & 15.7793 & 16.7753 & 13.9710 \\ 0 & 14.6661 & 16.7753 & 18.4206 & 12.1381 \\
 				1 & 19.9751 & 13.9710 & 12.1381 & 14.9159 \end{bmatrix}.$$
}
\end{thm}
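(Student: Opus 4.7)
The plan is to establish the bound by exhaustively enumerating candidate pseudocyclic homogeneous symmetric rank 5 SITAwIMs with $m \le 62$ through the variety reduction already set up for type $5S$ with $4$-transitive Galois group, and then to verify that the exhibited matrices indeed give a valid SITAwIM with the stated character table. Concretely, I would first invoke the reduction of the $124$ polynomial identities (coming from the structure-constant equations, the four trace identities $\mathrm{tr}(b_j)=m-1$ for $j=1,2,3,4$, and the commutativity relations) to the list of $6$ polynomials in $\mathbb{Q}[x_1,x_2,x_3,x_5,x_7,x_{14},x_{15},m]$ that forms a $\mathbb{Q}$-linear basis for the $21$-polynomial span. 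Because $m$ must be even (as shown via $2(x_3+x_5+x_{14}-x_{23}) = m$ in the preceding discussion), the enumeration proceeds over $m \in 2\mathbb{Z}^+$.

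For each even $m$ up to $62$, I would iterate over triples $(x_1,x_2,x_3)$ of nonnegative integers with $x_1+x_2+x_3 \le m$, substitute them into the six remaining polynomials, and compute a Gröbner basis of the resulting ideal in $\mathbb{Q}[x_5,x_7,x_{14},x_{15}]$; this system is low-dimensional enough that it resolves quickly. From each suitable nonnegative integer solution I would reconstruct the regular matrices and then factor their characteristic polynomials over $\mathbb{Q}$, applying \texttt{GaloisType} to each irreducible factor of degree $3$ or $4$ to detect any nonabelian Galois action. The claim is that for $m < 62$ every such solution produces only cyclotomic factors, while at $m = 62$ (giving $n = 1 + 4m = 249$) exactly one permutation-equivalence class of solutions yields four irreducible quartic factors with Galois group $S_4$. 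I would then list the specific triple $(x_1,x_2,x_3)$ producing the matrices displayed in the statement and verify directly that these integers satisfy the six reduced polynomials together with the nonlinear substitutions that were eliminated.

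Once the matrices are exhibited, the remaining verifications are algorithmic: the character table $P$ is computed by simultaneously diagonalizing the commuting family $\{b_1,b_2,b_3,b_4\}$ with numerical precision sufficient to pair each row with the appropriate irreducible factor, and the multiplicities can be checked via the formula $\sum_j |P_{ij}|^2/\delta_j = n/m_i$, confirming $m_i = 62$ for $i=1,2,3,4$. Self-duality is then established by observing $Q = P$ up to the row/column indexing, so that the dual intersection matrices $L_j^* $ are obtained from the Krein parameters $q_{ij}^k = \tfrac{m_i m_j}{n}\sum_\ell \tfrac{P_{i\ell}P_{j\ell}\overline{P_{k\ell}}}{\delta_\ell^2}$; the numerical entries in the displayed $L_j^*$ are then matched to six significant digits.

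The main obstacle is the combinatorial size of the search. Although the polynomial system in each fiber is easy for a Gröbner basis routine, the outer loop over $(m;x_1,x_2,x_3)$ produces on the order of $m^3$ fibers per $m$, and confirming the \emph{minimality} of $m=62$ requires that every single lower fiber be fully resolved and certified to contain no $S_4$-factor. Managing this exhaustively (and producing a reproducible record of the computation) is the delicate part; the eventual identification of the matrices displayed above, and the downstream character-table and Krein-parameter computations, are then essentially mechanical consequences of the reduction framework already established.
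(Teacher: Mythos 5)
Your proposal follows essentially the same route as the paper: the same reduction of the type $5S$, $4$-transitive variety to six polynomials in $\mathbb{Q}[x_1,x_2,x_3,x_5,x_7,x_{14},x_{15},m]$ with $m$ even, the same exhaustive outer loop over even $m\le 62$ and triples $(x_1,x_2,x_3)$ summing to at most $m$ with a per-fiber Gr\"obner basis, the same detection of noncyclotomic eigenvalues via \texttt{GaloisType} on irreducible factors of the characteristic polynomials, and the same downstream verification of $P$, self-duality, and the dual intersection matrices. No substantive difference from the paper's argument.
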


\begin{rmk} One might ask if there are metric association schemes of rank $5$ with noncyclotomic splitting fields that  have $4$-point transitive Galois groups.  With our method, this can be resolved by setting $x_4, x_5, x_9, x_{10}, x_{17}=0$, calculating the Gröbner basis, and using known intersection array restrictions to bound tridiagonal entries of $b_1$.  This approach allows one to make the same conclusion as Blau and Xu  obtain for pseudocyclic metric association schemes in general, that the intersection array has to be $[2,1,1,1;1,1,1,1]$ \cite[Theorem 5.4]{xu2011pseudocyclic}.  But the splitting field of this association scheme has a $3$-point transitive abelian Galois group, so the answer is no.  \end{rmk}

\subsubsection {Type 5S with $3$-point transitive Galois group}
The other possibility for a symmetric SITAwIM of rank $5$ with noncyclotomic eigenvalues is the case where the Galois group of the splitting field is non-abelian and acts $3$-point transitively, so must be isomorphic to $S_3$.  Let $(A,\mathbf{B})$ be such a SITAwIM, with splitting field $K$ and Galois group $G$, and suppose the orbits of $G$ on the irreducible characters of $A$ are $\{\chi_0\}$, $\{\chi_1\}$, and $\{ \chi_2, \chi_3, \chi_4 \}$.  In this situation the table algebra is not necessarily pseudocyclic, nor does it have to be homogeneous, so we do not have as many linear substitutions available to reduce our algebraic set initially.  Instead, to find the SITAwIMs of a given order, we can first make a list of possible rationalized character tables for SITAwIMs of that order.  The rationalized character table is an {\it integer} matrix with columns indexed by $\mathbf{B}$ and rows are indexed by the sums of irreducible characters of $A$ up to Galois conjugacy over $\mathbb{Q}$.  In our $3$-point transitive case, it takes this form:   
\begin{table}[H]	

\begin{longtable}{c|ccccc|c} 
	& $b_{0}$ & $b_{1}$ & $b_{2}$ & $b_{3}$ & $b_4$ & $\text{multiplicities}$ \\ \hline
	$\chi_{0}$ &$1 $&  $\delta_1$ & $\delta_2$  & $\delta_3$ & $\delta_4$ &  $1$ \\ 
	$\chi_{1}$ &$1$&  $a_1$ & $a_2$  & $a_3$ & $a_4$ & $m_1$\\ 
$\chi_2+\chi_3+\chi_4$ & $3$ & $t_1$ & $t_2$ & $t_3$ & $t_4$ & $3m_2$
	\label{table:p}	
\end{longtable} 
\end{table} 	 
The rows and columns of the rationalized character table satisfy orthogonality relations induced by those of the usual character table.  In our case the orthogonality relations give the following identities: 

\begin{itemize} 
\item $\delta_1+\delta_2+\delta_3+\delta_4= m_1 + 3m_2 = n-1$; 
\item $a_1 + a_2 + a_3 + a_4 = -1$; 
\item $t_1+t_2+t_3+t_4=-3$; 
\item $1 + \frac{a_1^2}{\delta_1} + \frac{a_2^2}{\delta_2} + \frac{a_3^2}{\delta_3}+\frac{a_4^2}{\delta_4} = \frac{n}{m_1};$
\item $3 + \frac{a_1t_1}{\delta_1} + \frac{a_2t_2}{\delta_2} + \frac{a_3t_3}{\delta_3}+\frac{a_4t_4}{\delta_4} = 0;$
\item $\delta_1+m_1a_1+m_2t_1=0$; 
\item $\delta_2+m_1a_2+m_2t_2=0$;
\item $\delta_3+m_1a_3+m_2t_3=0$; and 
\item $\delta_4+m_1a_4+m_2t_4=0$.
\end{itemize}

These identities are subject to the restrictions $1 \le m_1, m_2, \delta_1, \delta_2, \delta_3, \delta_4$, and $-\delta_i \le a_i \le \delta_i$ for $i=1,2,3,4$, and $-3\delta_i \le t_i \le 3\delta_i$ for $i=1,2,3,4$.  So a straightforward search will produce all the rationalized character tables possible whose associated SITAwIM would have degree $n$.     

Given a rationalized character table, we get four linear trace identities $tr(b_i)=\delta_i+a_i+t_i$, $i=1,2,3,4$ that can be added to our list of polynomial generators.  This helps us to reduce our search space enough to allow the search and Gröbner basis calculations techniques to uncover suitable nonnegative solutions to the system and produce regular matrices for a SITAwIM with this rationalized character table.  This approach has two computational barriers, which have limited our ability to guarantee a complete account only for orders up to $100$.  First, since we must consider every possibility for $m_1$ and $m_2$ with $1+m_1+3m_2=n$, the number of possible rational character tables of a given order can be very large and time-consuming to generate, and for almost all of these we find no SITAwIM.  Secondly, the values of the $x_i$'s are not as limited as they are in the homogeneous case, so when the minimum $\delta_i$ is large, the search space for all the values of $x_1$, $x_2$, and $x_3$ we need to check grows in size exponentially. 

Our complete search for orders up to $100$ found six examples.   
Their multiplicities and factorizations of the characteristic polynomials of their basis elements are as follows: 

{\center {\bf Noncyclotomic SITAwIMs of type $5S$: $3$-point transitive examples}}

{\footnotesize
$$\begin{array}{ll} 
n=35: & m_1=4, m_2=10, \mu_{b_i}(x) = (x-4)(x+1)(x^3-6x+2), (x-6)^2(x+1)^3, \\
         & (x-12)(x+3)(x^3-12x-2), (x-12)(x+3)(x^3-12x+12); \\
& \\
n=45: & m_1=8, m_2=12, \mu_{b_i}(x) = (x-4)^2(x+1)^3, (x-8)(x+1)(x^3-12x+14), \\
          &(x-8)(x+1)(x^3-12x+4), (x-24)(x+3)(x^3-18x+18); \\
& \\
n=76: & m_1=18, m_2=19, \mu_{b_i}(x) = (x-3)^2(x+1)^3, (x-18)(x+1)(x^3-27x-18), \\
          & (x-18)(x+1)(x^3-27x-42), (x-36)(x+2)(x^3-36x-48); \\
& \\
n=88^a: & m_1=66, m_2=7, \mu_{b_i}(x) = (x-3)^4(x+1), (x-14)(x)(x^3+2x^2-72x-16), \\
         & (x-35)(x)(x^3+5x^2-120x-360), (x-35)(x)(x^3+5x^2-120x+80); \\
& \\
n=88^b: & m_1=66, m_2=7, \mu_{b_i}(x) = (x-3)^4(x+1), (x-21)(x)(x^3+3x^2-96x-384), \\
          & (x-21)(x)(x^3+3x^2-96x-472), (x-42)(x)(x^3+6x^2-120x-784); \mbox{ and } \\
& \\ 
n=93: & m_1=2, m_2=30, \mu_{b_i}(x) = (x-12)(x+6)(x^3-15x+2), (x-20)(x+10)(x^3-21x-16), \\ 
         & (x-30)(x+15)(x^3-24x+8), (x-30)^2(x+1)^3. \\
\end{array}$$
}
Narrow searches of orders $101$ to $250$, the first with $\delta_1 \le 4$ and at least two of $\delta_2$, $\delta_3$, and $\delta_4$ equal, and the second with $\delta_1 \le 12$, $a_1=k_1$, and at least two of $\delta_2$, $\delta_3$, and $\delta_4$ equal produced a few more examples: 

{\footnotesize
$$\begin{array}{ll} 
n=116: & m_1=58, m_2=19, \mu_{b_i}(x) = (x-1)^4(x+1), (x-19)(x)(x^3+x^2-48x+72) \\
           & (x-19)(x)(x^3+x^2-48x-44), (x-76)(x)(x^3+4x^2-72x-32); \\
& \\
n=129: & m_1=86, m_2=14, \mu_{b_i}(x) = (x-2)^4(x+1), (x-28)(x)(x^3+2x^2-99x+150) \\
           & (x-28)(x)(x^3+2x^2-99x-108), (x-70)(x)(x^3+5x^2-135x-75); \\
& \\
n=165: & m_1=32, m_2=44, \mu_{b_i}(x) = (x-4)^2(x+1)^3, (x-32)(x+1)(x^3-48x-32), \\ 
           & (x-32)(x+1)(x^3-48x-112), (x-96)(x+3)(x^3-72x-144); \\
& \\ 
n=189: & m_1=20, m_2=56, \mu_{b_i}(x) = (x-8)^2(x+1)^3, (x-20)(x+1)(x^3-30x-20), \\
           & (x-80)(x+4)(x^3-75x+70), (x-80)(x+4)(x^3-75x-200); \\
&\\

n=190: & m_1=18, m_2=57, \mu_{b_i}(x) = (x-9)^2(x+1)^3, (x-36)(x+2)(x^3-48x+32), \\
        	& (x-36)(x+2)(x^3-48x+112), (x-108)(x+6)(x^3-72x+144); \\
& \\
n=217: & m_1=30, m_2=62, \mu_{b_i}(x) = (x-6)^2(x+1)^3, (x-60)(x+2)(x^3-75x-100), \\
        	& (x-60)(x+2)(x^3-75x-170), (x-90)(x+3)(x^3-90x-180); \\
& \\
n=231^a: & m_1=32, m_2=66, \mu_{b_i}(x) = (x-6)^2(x+1)^3, (x-32)(x+1)(x^3-48x-96), \\
           	& (x-96)(x+3)(x^3-96x-352), (x-96)(x+3)(x^3-96x-128); \\
& \\
n=231^b: &  m_1=32, m_2=66, \mu_{b_i}(x) = (x-6)^2(x+1)^3, (x-32)(x+1)(x^3-48x+16), \\
             	& (x-96)(x+3)(x^3-96x-128), (x-96)(x+3)(x^3-96x+208); \\
\end{array}
$$
}

\begin{exa} The smallest noncyclotomic symmetric rank $5$ SITAwIM with order $n=35$ has regular matrices $b_0$, 
	
{\footnotesize
$$ b_1 = \begin{bmatrix} 0 & 4 & 0 & 0 & 0 \\ 1 & 0 & 0 & 0 & 3 \\ 0 & 0 & 0 & 2 & 2 \\ 0 & 0 & 1 & 2 & 1 \\ 0 & 1 & 1 & 1 & 1 \end{bmatrix}, b_2 = \begin{bmatrix} 0 & 0 & 6 & 0 & 0 \\ 0 & 0 & 0 & 3 & 3 \\ 1 & 0 & 5 & 0 & 0 \\ 0 & 1 & 0 & 2 & 3 \\ 0 & 1 & 0 & 3 & 2 \end{bmatrix}, b_3 = \begin{bmatrix} 0 & 0 & 0 & 12 & 0 \\ 0 & 0 & 3 & 6 & 3 \\ 0 & 2 & 0 & 4 & 6 \\ 1 & 2 & 2 & 4 & 3 \\ 0 & 1 & 3 & 3 & 5 \end{bmatrix}, \mbox{ and } b_4 = \begin{bmatrix} 0 & 0 & 0 & 0 & 12 \\  0 & 3 & 3 & 3 & 3 \\ 0 & 2 & 0 & 6 & 4 \\ 0 & 1 & 3 & 3 & 5 \\ 1 & 1 & 2 & 5 & 3  \end{bmatrix}. $$}

Its first and second eigenmatrices  (with irrationals approximated to six significant digits) are as follows: 

{\footnotesize
$$ P = \begin{bmatrix} 1 & 4 & 6 & 12 & 12 \\ 1 & -1 & 6 & -3 & -3 \\ 
1 & -2.60168 & -1 & -0.167055 & 2.768734 \\ 1 & 0.339877 & -1 & 3.54461 & -3.88448 \\ 
1 & 2.26180 & -1 & -3.37755 & 1.11575  \end{bmatrix}, \mbox{and } 
Q= \begin{bmatrix} 1 & 4 & 10 & 10 & 10 \\ 1 & -1 & -6.50420 & 0.849692 & 5.65451 \\ 
1 & 4 & -5/3 & -5/3 & -5/3 \\ 1 & -1 & -0.139212 & 2.95384 & -2.81463 \\
1 & -1 & 2.30728 & -3.23707 & 0.929791 \end{bmatrix}. $$}

Its dual intersection matrices, again with irrational entries approximated to six significant digits, are: $L_0^* = b_0, $

{\footnotesize
$$ L_1^* = \begin{bmatrix} 0 & 4 & 0 & 0 & 0 \\ 1 & 3 & 0 & 0 & 0 \\ 
0 & 0 & 2/3 & 5/3 & 5/3 \\ 0 & 0 & 5/3 & 2/3 & 5/3 \\ 
0 & 0 & 5/3 & 5/3 & 2/3 \end{bmatrix}, L_2^* = \begin{bmatrix} 0 & 0 & 10 & 0 & 0 \\ 0 & 0 & 5/3 & 25/6 & 25/6 \\ 
1 & 2/3 & 0.0541562 & 2.59972 & 5.67949 \\ 0 & 5/3 & 2.59972 & 3.51139 & 20/9 \\ 
0 & 5/3 & 5.67946 & 20/9 & 0.431651 \end{bmatrix}, $$
$$ L_3^* = \begin{bmatrix} 0 & 0 & 0 & 10 & 0 \\ 0 & 0 & 25/6 & 5/3 & 25/6 \\
                 0 & 5/3 & 2.59972 & 3.51139 & 20/9 \\ 1 & 2/3 & 3.51139 & 2.50545 & 2.31644 \\
					0 & 5/3 & 20/9 & 2.31169 & 3.79463 \end{bmatrix}, \mbox{ and } 
L_4^* = \begin{bmatrix} 0 & 0 & 0 & 0 & 10 \\ 0 & 0 & 25/6 & 25/6 & 5/3 \\ 
              0 & 5/3 & 5.67946 & 20/9 & 0.431651 \\ 0 & 5/3 & 20/9 & 2.31649 & 3.79463 \\
 				1 & 2/3 & 0.431651 & 3.79463 & 4.10706 \end{bmatrix}. $$}\\
\end{exa}

\section{Checking feasibility} 

In this section we review the feasibility checks we have applied to the parameters of the noncyclotomic symmetric rank $5$ SITAwIMs identified in the previous section.   The parameters include the regular (a.k.a.~intersection) matrices $b_i$ $(i \in \{0,1,\dots,r-1\})$, the character table (first eigenmatrix) $P$, the dual character table (second eigenmatrix) $Q$, and the dual intersection matrices (Krein parameters) $L_i^* = (\kappa_{ijk})_{k,j=0}^{r-1}$ $(i \in \{0,1,\dots,r-1\})$.  For commutative association schemes, we consider these to be equivalent since knowledge of any one of these determines the other.  

We have tested our examples on the following feasibility conditions, which apply to general symmetric association schemes: 

\begin{itemize} 
\item the handshaking lemma: for $i,j \in \{1,\dots,r-1\}$, if $i \ne j$, then $(b_i)_{i,j}k_j$ must be even (see \cite[Lemma 7]{herman-muzychuk-xu2018}); 
\item realizability of all closed subsets and quotients; 
\item the triangle count condition: for $j=1,\dots,r-1$, $\displaystyle{\frac{1}{6}\sum_{i=0}^{r-1}} m_i P_{i,j}^3 = t \in \mathbb{N}$; 
\item the absolute bound condition: for $i \in \{0,\dots,r-1\}$, $\displaystyle{\sum_{k; q_{ijk}\ne 0}} m_k \le \begin{cases} m_i m_j & i \ne j \\ {{m_i+1}\choose{2}} & i = j \end{cases}$;
\item nonnegativity of Krein parameters: $(L_i^*)_{j,k} \ge 0$ for $i,j,k \in \{0,\dots,r-1\}$; and 
\item Martin and Kodalen's Gegenbauer polynomial criterion (see \cite[Theorem 3.7 and Corollary 3.8]{kodalen2019}). 
\end{itemize}
 We are aware of one more feasibility condition for symmetric association schemes, the {\it forbidden quadruple} condition described in \cite[Corollary 4.2]{gavrilyuk-vidali-williford2021}. Our $4$-point transitive examples do not have any nontrivial Krein parameters equal to zero, so they satisfy this condition vacuously.  This is not the case for our $3$-point transitive examples, to date these have not been tested for this condition.    

We have ordered these feasibility conditions according to the ease we are able to check them.  Since our algorithms require the multiplicities as part of the input and produce the intersection matrices, we have to compute $P$, then $Q$, then the dual intersection matrices in order from there.  As our objective is only to report the examples that pass all conditions, once an example fails one of our conditions below it is removed and its status for subsequent conditions is not reported.  

\medskip
We will indicate our examples from the previous section by Galois group action and order: $3pt35$, $3pt45$, etc. Recall that $3pt35$ means the $3$-point transitive example of order 35.

\subsection{Handshaking lemma condition:} Only five of our examples have nontrivial basis elements of odd degree, of these five, three of them fail the handshaking lemma condition: $3pt88^a$, $3pt88^b$, and $3pt116$. $3pt76$ and $3pt190$ pass despite having a nontrivial basis element of odd degree.  

\subsection{Realizability of closed subsets and quotients:} All of our $4$-point transitive examples are primitive, so there are no closed subsets or quotients to consider.  On the other hand, all of the remaining $3$-point transitive examples have a unique nontrivial closed subset of rank $2$.  For all but one of these, the quotient also has rank $2$.  The exception is $3pt129$, for which the quotient has rank $4$.  Since this quotient table algebra has an element of non-integral degree, it is not realizable as an association scheme. 

\subsection{Triangle count condition.} All of our examples pass.

\subsection{Absolute bound condition.}  All of our $4$-point transitive examples pass. We can see from the multiplicities that $3pt45$, $3pt76$, and $3pt165$ will pass.   $3pt35$ could potentially fail for $i=j=1$ but passes because $\kappa_{1,1,k}=0$ for $k=2,3,4$. $3pt93$ and $3pt129$ also pass because enough nontrivial Krein parameters are $0$.  

\subsection{Nonnegative Krein parameter condition.} For all of our $3$- and $4$-point transitive examples, we have calculated the dual intersection matrices and found them to be nonnegative.   

\subsection{Gegenbauer polynomial condition.} We check that $G^{m_i}_{\ell}(\frac{1}{m_i}L_i^*)$ is a nonnegative matrix for all $\ell \ge 1$ and $i=1,\dots,4$ using the approach of \cite[\S 3.3]{kodalen2019}.  

We illustrate the process of checking this condition with $3pt35$.  In the case $m_1=4$, it is not possible to find an $\ell^*$ satisfying the conditions of \cite[Corollary 3.16]{kodalen2019}. However, $L_1^*$ is a block matrix, and the upper left $2 \times 2$ block $\begin{bmatrix} 0 & 4 \\ 1 & 3 \end{bmatrix}$ is the dual intersection matrix corresponding to the association scheme generated by the complete graph of order $5$, in which it also occurs with nontrivial multiplicity $4$.  It follows that the first column of $G^{m_i}_{\ell}(\frac{1}{m_i}L_i^*)$ will always be nonnegative for all $\ell \ge 1$, so the result follows by \cite[Corollary 3.8]{kodalen2019} and the remark following it.    

In the cases $m_2 =m_3=m_4=10$, we find that the minimum $\ell^*$ required for \cite[Corollary 3.16]{kodalen2019} is $\ell^*=6$, and we can check that $G^{10}_{\ell}(\frac{1}{10}L_i^*)$ has nonnegative entries for all $\ell \in \{1,\dots,7\}$ and all $i = 2,3,4$.  So, $3pt35$ passes all the feasibility conditions, with the possible exception of the forbidden quadruple condition.   

In all of the remaining $3$-transitive examples, $\mathbf{B}^*$ contains a rank $2$ closed subset of order $m_i+1$ for one $i$.  So, a similar argument as in the $3pt35$ case applies for this $m_i$.  For the other $m_i$ a suitable $\ell^*$ can be found.  After evaluating the appropriate Gegenbauer polynomials at $\frac{1}{m_i}L_i^*$, we found the result to be a nonnegative matrix. 

All of our $4$-point transitive examples pass the Gegenbauer polynomial test.  In each case we have found a value of $\ell^*$ and shown all of the required evaluations result in nonnegative matrices. 

In summary, we have verified that the six $4$-point transitive examples pass all of the feasibility conditions, and ten of the $3$-point transitive examples pass them: $3pt35$, $3pt45$, $3pt76$, $3pt93$, $3pt165$, $3pt189$, $3pt190$, $3pt217$, $3pt231^a$, and $3pt231^b$. Note that by the partial classification of association schemes of order $35$ and rank $5$ in \cite{AH-IM}, we know $3pt35$ cannot be realized. 

\noindent{\sc Acknowledgment:} We would like to thank  anonymous referees for carefully reading the manuscript and for their insightful comments.

\end{document}